\documentclass{amsart}

\usepackage{amssymb}
\usepackage{amsthm}
\usepackage{amsmath} %, a4wide}
\usepackage{amsbsy}
\usepackage{bm}
\usepackage{hyperref}
\usepackage{tikz}
\setlength\parindent{0pt}

\newtheorem{theorem}{Theorem}

\newtheorem{corollary}{Corollary}
\newtheorem{lem}{Lemma}
\newtheorem*{lem*}{Lemma}

\usepackage{hyperref} %,hypertexnames=false,colorlinks,[pagebackref]
\hypersetup{
%    bookmarks=true,         % show bookmarks bar?
%    unicode=false,          % non-Latin characters in Acrobat\UTF{00D5}s bookmarks
%    pdftoolbar=true,        % show Acrobat\UTF{00D5}s toolbar?
%    pdfmenubar=true,        % show Acrobat\UTF{00D5}s menu?
%    pdffitwindow=false,     % window fit to page when opened
%    pdfstartview={FitP},    % fits the width of the page to the window
%    pdftitle={My title},    % title
%    pdfauthor={Author},     % author
%    pdfsubject={Subject},   % subject of the document
%    pdfcreator={Creator},   % creator of the document
%    pdfproducer={Producer}, % producer of the document
%    pdfkeywords={keywords}, % list of keywords
%    pdfnewwindow=true,      % links in new window
    colorlinks=true,       % false: boxed links; true: colored links
    linkcolor=blue,          % color of internal links
    citecolor=magenta,        % color of links to bibliography
    filecolor=magenta,      % color of file links
    urlcolor=cyan           % color of external links
%    pagebackref=true
}

\numberwithin{equation}{section}

%%%%%%%%%%%%%%%%% Math definitions %%%%%%%%%%%%%%%

% angles, brackets ...
%\newcommand{\1}{{\mathbbm 1}}

%\def\ll{\lesssim}

\def\f{\frac}

\def\({\left(}
\def \){ \right)}

\def\Bl{\Bigl}
\def\Br{\Bigr}
% d in differential, e in exponential, and i in complex
 
 \def\ee{{\textnormal{e}}}
 \def\i{{\textnormal{i}}}
 
 \def\Ga{\Gamma}
% greeks

%\def\be{{\beta}}
\def\da{{\delta}}

 \def\t{{\theta}}

 \def\s{{\sigma}}

% bold letters

%cal letters

% bbb families

 \def\NN{{\mathbb N}}

 \def\RR{{\mathbb R}}
 \def\SS{{\mathbb S}}

 %operator names

  \def\sph{\mathbb{S}^{d}}

\newcommand{\wh}{\widehat}

%%%%%%%%%%%%%%%%%%%%%%%%%%%%%%%%%%%%%%%%%%%%%%%

%\usepackage{multind}
%\usepackage{amsmidx}
%\makeindex{A}
%\makeindex{S}
%\makeindex{SY}

%\usepackage{epsfig}
%\usepackage{subfigure}
%\usepackage{graphicx}

%\usepackage{fourier}
%\def\diam{\text{diam}}

\def\ld{\lambda}

%\usepackage{amsmath,amssymb}
%% my poor man's solution to arc notation
%\newcommand{\tarc}{\mbox{\large$\frown$}}
%\newcommand{\arc}[1]{\stackrel{\tarc}{#1}}
%%%
%\pagestyle{empty}

%\newcommand{\wt}{\widetilde}

%\newcommand{\eqref}[1]{(\ref{#1})}

\def\be{\begin{equation}}
\def\ee{\end{equation}}

\title{General and refined Montgomery Lemmata}

\author{Dmitriy Bilyk}
\address{School of Mathematics, University of Minnesota, Minneapolis, MN 55408, USA.}
\email{dbilyk@math.umn.edu}

\author{Feng Dai}
\address{Department of Mathematical and Statistical Sciences\\
University of Alberta\\ Edmonton, Alberta T6G 2G1, Canada.}
\email{fdai@ualberta.ca}

\author{Stefan Steinerberger}
\address{Department of Mathematics\\ Yale University\\ New Haven, CT 06510, USA}
\email{stefan.steinerberger@yale.edu}

\subjclass[2010]{11L99, 11K38, 35B05, 35B40, 42B05, 52C35}
\keywords{Riesz energy, Discrepancy, Exponential sums, Stolarsky principle, Irregularities of Distribution, Spherical Cap Discrepancy, Beck gain.}

\begin{document}
\begin{abstract}
Montgomery's Lemma on the torus $\mathbb{T}^d$ states that a sum of $N$ Dirac masses cannot be orthogonal to many low-frequency trigonometric functions in a quantified way. We provide an extension to general manifolds that also allows for positive weights: let $(M,g)$ be a smooth compact $d-$dimensional manifold without boundary, let $(\phi_k)_{k=0}^{\infty}$ denote
the Laplacian eigenfunctions, let $\left\{ x_1, \dots, x_N\right\} \subset M$ be a set of points and $\left\{a_1, \dots, a_N\right\} \subset \mathbb{R}_{\geq 0}$ be a sequence of nonnegative weights. Then
 $$\sum_{k=0}^{X}{ \left| \sum_{n=1}^{N}{ a_n \phi_k(x_n)} \right|^2} \gtrsim_{(M,g)}    \left(\sum_{i=1}^{N}{a_i^2} \right) \frac{ X}{(\log{X})^{\frac{d}{2}}}.$$
This result is sharp up to the logarithmic factor.  Furthermore, we  prove a refined spherical version of  Montgomery's Lemma, and provide applications to estimates  of discrepancy and discrete energies of $N$ points on the sphere    $\mathbb{S}^{d}$.
\end{abstract}
\maketitle

\section{Introduction}
\subsection{Montgomery's Lemma.} The lemma, which constitutes the main subject of our investigation, has its origins in the theory of {\it{irregularities of distribution}}.  
Let $\left\{ x_1, \dots, x_N\right\} \subset \mathbb{T}^2 \cong [0,1)^2$ be a set of  $N$ points. Montgomery's theorem \cite{mont1} (see also Beck \cite{beck1, beck}) guarantees the existence of  a disk $D \subset \mathbb{T}^2$ with radius $1/4$ or $1/2$ such that the proportion of points in the disk is either much larger or much smaller than what  is predicted by the area
\begin{equation}\label{e.montdisc}
 \left| \frac1{N} \cdot \# \left\{1 \leq i \leq N: x_i \in D\right\} -  |D|\right| \gtrsim N^{-3/4}.
 \end{equation}
Higher-dimensional version of this statement  for sets in $\mathbb T^d$ holds with the right-hand side of the order $\displaystyle{N^{-\frac12 - \frac1{2d}}}$. 
The proof of Montgomery's argument proceeds as follows: we first bound the $L^{\infty}$-norm of the  `discrepancy function'  trivially from
below by the $L^2-$norm and then use Parseval's identity to multiplicatively separate the Fourier transform
of the characteristic function of the geometric shape (in the example above: a disk) and the Fourier coefficients of the Dirac measures located at $\left\{ x_1, \dots, x_N\right\} \subset \mathbb{T}^2$
$$ \widehat{     \left( \sum_{n=1}^{N}{\delta_{x_n}} \right)  } \,\,   (k) = %\sum_{k \in \mathbb{Z}^2}{ \left( 
\sum_{n=1}^{N}{ e^{-2 \pi i \left\langle k, x_n \right\rangle}} \,\,\, \textup{ for } \, k \in \mathbb{Z}^2 .$$
%\right) e^{2\pi i \left\langle k, x\right\rangle} }.$$
A fundamental ingredient of the method is the fact that the Fourier transform of finite set of Dirac measures cannot be too small on low frequencies.

\begin{lem*}[Montgomery \cite{mont1}] For any $\left\{ x_1, \dots, x_N\right\} \subset \mathbb{T}^2$ and $X \geq 0$
 \begin{equation}\label{e.mont} 
 \sum_{|k_1| \leq X} \sum_{|k_2| \leq X}{ \left| \sum_{n=1}^{N}{ e^{2 \pi i \left\langle k, x_n \right\rangle}}\right|^2} \geq N X^2.
 \end{equation}
\end{lem*}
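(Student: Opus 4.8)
The plan is to exploit positivity of the \emph{Fej\'er} kernel rather than working directly with the Dirichlet kernel attached to the box $\{|k_1|, |k_2|\le X\}$. If one simply expands the left-hand side of \eqref{e.mont} as $\sum_{m,n} D_X((x_m-x_n)_1)\,D_X((x_m-x_n)_2)$, where $D_X$ is a one-dimensional Dirichlet-type kernel, then the diagonal $m=n$ already contributes the desired main term of order $NX^2$; the trouble is that $D_X$ changes sign, so the off-diagonal terms cannot be discarded. The remedy is to insert nonnegative triangular weights, which turn each one-dimensional factor into a Fej\'er kernel and hence make the whole expansion a sum of nonnegative terms.

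Concretely, set $M:=\lfloor X\rfloor$ and recall the one-dimensional Fej\'er kernel
\[
F_M(t)=\sum_{|k|\le M}\Bigl(1-\frac{|k|}{M+1}\Bigr)e^{2\pi i k t}=\frac{1}{M+1}\Bigl|\sum_{j=0}^{M}e^{2\pi i j t}\Bigr|^2\ge 0,\qquad F_M(0)=M+1.
\]
First I would introduce the weighted sum
\[
S:=\sum_{|k_1|\le M}\sum_{|k_2|\le M}\Bigl(1-\frac{|k_1|}{M+1}\Bigr)\Bigl(1-\frac{|k_2|}{M+1}\Bigr)\Bigl|\sum_{n=1}^{N}e^{2\pi i\langle k,x_n\rangle}\Bigr|^2 .
\]
Expanding the modulus squared as $\sum_{m,n}e^{2\pi i\langle k,x_m-x_n\rangle}$ and summing over the two coordinates separately factors $S$ as $\sum_{m,n}F_M\bigl((x_m-x_n)_1\bigr)\,F_M\bigl((x_m-x_n)_2\bigr)$. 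Since every term in this double sum is nonnegative, we may retain only the diagonal $m=n$, obtaining $S\ge N\,F_M(0)^2=N(M+1)^2$.

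For the matching upper bound, note that each weight $1-|k_j|/(M+1)$ lies in $[0,1]$ and each summand $\bigl|\sum_n e^{2\pi i\langle k,x_n\rangle}\bigr|^2$ is nonnegative, so $S\le \sum_{|k_1|\le M}\sum_{|k_2|\le M}\bigl|\sum_n e^{2\pi i\langle k,x_n\rangle}\bigr|^2$. Because $M=\lfloor X\rfloor\le X$, this last sum is dominated by the full sum over $\{|k_1|,|k_2|\le X\}$ appearing in \eqref{e.mont}. Chaining the two inequalities, the left-hand side of \eqref{e.mont} is at least $N(M+1)^2=N(\lfloor X\rfloor+1)^2\ge NX^2$, since $\lfloor X\rfloor+1>X\ge 0$; the regime $X<1$ (where $M=0$) is covered and gives the trivial bound $N\ge NX^2$.

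I do not anticipate a genuine obstacle once the Fej\'er substitution is identified as the right move; the only points needing a little care are the bookkeeping with the floor function, namely that $(\lfloor X\rfloor+1)^2\ge X^2$ for all $X\ge 0$ and that enlarging the index set from $|k_j|\le M$ to $|k_j|\le X$ only increases the sum. It is worth recording that the same principle — replacing a sharp frequency cutoff by a nonnegative Fourier multiplier that still majorizes it — is exactly the template one expects to use for the manifold version, with the role of the Fej\'er kernel played by a heat-kernel or Bochner--Riesz-type positive-definite object adapted to the eigenfunctions $(\phi_k)$.
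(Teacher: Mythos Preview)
Your proof is correct. The Fej\'er-weight trick is exactly the right move: positivity of $F_M$ lets you discard the off-diagonal terms, and the bookkeeping with $\lfloor X\rfloor$ is fine since $\lfloor X\rfloor+1>X$ for all real $X$.

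As for comparison: the paper does not actually supply a proof of this lemma; it is quoted as a known result of Montgomery and simply cited. That said, your approach is precisely the template the paper adopts for its own results. In the spherical setting (Lemma~\ref{lem-1-1} and the proof of Theorem~\ref{thm-1-1}) the authors replace the sharp cutoff by the Ces\`aro kernel $K_L^\delta$, which is nonnegative for $\delta\ge d$ by Kogbetliantz's theorem, and then average further to obtain $G_n^{d+1}$ and $G_n^{d+2}$ --- the higher-dimensional analogue of your Fej\'er substitution. For the general-manifold Theorem~\ref{t.1} the positive-definite object is the heat kernel $p_t(x,y)=\sum_k e^{-\lambda_k t}\phi_k(x)\phi_k(y)$, again a smooth nonnegative replacement for the indicator of $\{k\le X\}$. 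Your closing remark anticipated this correctly: the entire paper runs on the principle ``bound the sharp frequency cutoff below by a positive-definite kernel whose diagonal is large.''
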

This inequality is a two-dimensional analogue of an earlier result of Cassels \cite{cassels} and related to a result of Siegel \cite{siegel}.
Montgomery's Lemma is essentially sharp, generalizations  of the statement to $\mathbb{T}^d$ are straightforward.
This discussion suggests that expression akin to the left-hand side of \eqref{e.mont} can be used as  measures of uniformity of discrete sets of  points, much like the discrepancy  \eqref{e.montdisc}, see \cite{LuSt}. 

\subsection{Related recent results}
A slight sharpening of Montgomery's Lemma has recently been given by the third author in \cite{stein1}  (we only
describe the result on $\mathbb{T}^2$,  but higher-dimensional versions also hold): 
for all $\left\{ x_1, \dots, x_N\right\} \subset \mathbb{T}^2$ and $X \geq 0$
 \begin{equation}\label{e.St}
 \sum_{\|k\| \leq X}{ \left| \sum_{n=1}^{N}{ e^{2 \pi i \left\langle k, x_n \right\rangle}}\right|^2} \gtrsim \sum_{i,j=1}^{N}{ \frac{X^2}{1 +  X^4 \|x_i -x_j\|^4}}.
 \end{equation}
This quantifies the natural notion that any type of clustering of the points is going to decrease the orthogonality to trigonometric functions. 
Montgomery's Lemma has usually been regarded as an inequality on the torus as opposed to a more general principle. However, in the study of irregularities of
distribution on the sphere $\mathbb{S}^{d}$, the natural analogue of Fourier series is given by harmonic polynomials which are also well understood and allow
for fairly explicit analysis. In \cite{bil2} the first and second author proved a generalization of \eqref{e.montdisc} on $\mathbb S^d$, which essentially boiled down to a spherical analogue of \eqref{e.mont}. Namely, denoting the eigenfunctions of the spherical  Laplacian (i.e. spherical harmonics) by $\phi_0, \dots, \phi_k, \dots$, this inequality states 
\begin{equation}\label{e.bd} 
\sum_{k=0}^{X}{ \left| \sum_{n=1}^{N}{ \phi_k(x_n)} \right|^2} \gtrsim_d N X
\end{equation}

We observe that $\phi_0$ is constant and thus the first term is already of size $\sim N^2$. Exactly like on $\mathbb{T}^d$, for $k=0$  the inner sum is of size $N^2$ and the inequality is only interesting when
the number of eigenfunction $X$ starts to outnumber the number of points $X \gtrsim N$. This is also necessary because there are point sets that are orthogonal to the first
$\sim N$ eigenfunctions (this is classical on $\mathbb{T}^d$ and a substantial result on $\mathbb{S}^{d}$, see \cite{ahrens, bond}; it is likely to hold at a much greater
level of generality).

\section{Main results}\label{s.main}
In the present paper we further extend  Montgomery's Lemma in two different directions. First, we extend and generalize the statement of Montogomery's Lemma \eqref{e.mont} to general manifolds (with a  logarithmic loss). Second, in the case of the sphere $\mathbb S^d$, we combine the ideas of \eqref{e.St}-\eqref{e.bd} and prove a spherical analogue of \eqref{e.St}, which refines \eqref{e.bd}. We also provide several applications of this result to irregularities of distribution and energy minimization on the sphere: a notably example is a refinement of Beck's lower bound on the $L^2-$spherical cap discrepancy. 

\subsection{Montgomery Lemma on general manifolds.} We now phrase a general version of Montgomery's Lemma on compact manifolds. It relates
to various natural questions and we believe that a sharper form would be quite desirable. 

\begin{theorem}\label{t.1} Let $(M,g)$ be a smooth compact $d-$dimensional manifold, let $(\phi_k)_{k=0}^{\infty}$ denote
the $L^2-$normalized Laplacian eigenfunctions of $-\Delta_g$ with the corresponding eigenvalues arranged in increasing order.  Let $\left\{ x_1, \dots, x_N\right\} \subset M$, and let  $(a_i)_{i=1}^{N}$ be a set
of nonnegative weights. Then
 $$\sum_{k=0}^{X}{ \left| \sum_{n=1}^{N}{ a_n \phi_k(x_n)} \right|^2} \gtrsim_{(M,g)}  \left(\sum_{i=1}^{N}{a_i^2} \right) \frac{ X}{(\log{X})^\frac{d}{2}}.$$
\end{theorem}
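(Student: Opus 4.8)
The plan is to test the spectral sum against a well-chosen localized function and exploit the on-diagonal behavior of the heat kernel. Let $p_t(x,y) = \sum_{k=0}^\infty e^{-\lambda_k t}\phi_k(x)\phi_k(y)$ be the heat kernel of $(M,g)$, where $\lambda_k$ is the eigenvalue of $-\Delta_g$ associated with $\phi_k$. The key analytic input is the standard short-time asymptotic $p_t(x,x) \sim (4\pi t)^{-d/2}$ as $t \to 0^+$, uniformly in $x$, together with the two-sided Gaussian bound $p_t(x,y) \lesssim t^{-d/2}\exp(-c\, \dist(x,y)^2/t)$. I would form the quantity
\begin{equation*}
S := \sum_{i,j=1}^N a_i a_j \sum_{k=0}^\infty e^{-\lambda_k t} \phi_k(x_i)\phi_k(x_j) = \sum_{i,j=1}^N a_i a_j\, p_t(x_i,x_j),
\end{equation*}
which on one hand is bounded below by the diagonal terms $\sum_i a_i^2 \,p_t(x_i,x_i) \gtrsim t^{-d/2}\sum_i a_i^2$ (using positivity of $a_i$ only to the extent of keeping the diagonal, and handling off-diagonal terms separately — see below), and on the other hand, Cauchy--Schwarz against the weights $e^{-\lambda_k t}$ splits it.

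The subtlety is that the off-diagonal terms $a_ia_j p_t(x_i,x_j)$ with $i\ne j$ are also nonnegative, so they only \emph{help} the lower bound $S \gtrsim t^{-d/2}\sum_i a_i^2$; no clustering hypothesis is needed. The real work is the upper bound for $S$ in terms of the truncated sum. Write $S = \sum_k e^{-\lambda_k t} c_k$ where $c_k := |\sum_n a_n\phi_k(x_n)|^2 \ge 0$. Split at the spectral cutoff $X$: the tail $\sum_{k > X} e^{-\lambda_k t} c_k$ must be absorbed. Using Weyl's law $\lambda_k \asymp k^{2/d}$ and the pointwise bound $\|\phi_k\|_\infty \lesssim \lambda_k^{(d-1)/4}$ (Hörmander), one gets $c_k \lesssim \lambda_k^{(d-1)/2}(\sum_n a_n)^2$; but to keep the clean $\sum a_i^2$ on the right we instead only use $c_k \lesssim \lambda_k^{(d-1)/2} N \sum_n a_n^2$ via Cauchy--Schwarz, or better, bound the tail by comparison: $\sum_{k>X} e^{-\lambda_k t} c_k \le (\sup_{k>X} e^{-\lambda_k t} k^{\text{power}}) \cdot (\text{partial sums})$. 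Choosing $t \asymp (\log X)/\lambda_X \asymp (\log X) X^{-2/d}$ makes $e^{-\lambda_k t} \le X^{-A}$ for $k > X$ with $A$ large, killing the tail against any polynomial factor. Then
\begin{equation*}
t^{-d/2}\sum_i a_i^2 \;\lesssim\; S \;\lesssim\; \sum_{k=0}^X e^{-\lambda_k t} c_k + (\text{negligible}) \;\le\; \sum_{k=0}^X c_k,
\end{equation*}
and substituting $t^{-d/2} \asymp X/(\log X)^{d/2}$ yields the claimed bound. (One should be slightly careful that the asymptotic $p_t(x,x)\gtrsim t^{-d/2}$ holds only for small $t$, i.e. large $X$; the case of bounded $X$ is trivial since the $k=0$ term $|\sum_n a_n\phi_0(x_n)|^2 = \phi_0^2(\sum a_n)^2 \gtrsim \sum a_i^2$ already, as $\phi_0$ is a nonzero constant and the $a_i$ are nonnegative.)

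The main obstacle I anticipate is getting the logarithmic power exactly $(\log X)^{d/2}$ rather than something worse. This is governed precisely by how large $t$ must be chosen to suppress the spectral tail: the tail involves $\sum_{k} e^{-\lambda_k t}$-type sums weighted by $c_k \lesssim \lambda_k^{(d-1)/2}\cdot(\text{something})$, and one needs $t$ just large enough that $e^{-\lambda_X t}$ beats the accumulated polynomial growth $\sum_{k\le Y} \lambda_k^{(d-1)/2} \asymp Y^{(d-1)/d}\cdot Y = Y^{(2d-1)/d}$ of the partial sums up to $Y$. Summation by parts in $k$ against $e^{-\lambda_k t}$ shows the tail is $\lesssim t^{-(2d-1)/(2d)}\cdot t^{-?}$-free once $t \gtrsim \lambda_X^{-1}\log X$; the factor $(\log X)^{d/2}$ emerges from $t^{-d/2}$ with this choice of $t$. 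Verifying that no extra logarithmic losses creep in from the interplay between Weyl's law, the $\|\phi_k\|_\infty$ bound, and the summation by parts is the delicate bookkeeping step, but it is purely a matter of careful estimation with no conceptual gap.
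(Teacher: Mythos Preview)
Your approach is essentially identical to the paper's: insert heat-kernel weights $e^{-\lambda_k t}$, use positivity of the off-diagonal terms together with the on-diagonal short-time asymptotic $p_t(x,x)\gtrsim t^{-d/2}$ for the lower bound, control the spectral tail $k>X$ via Weyl's law and H\"ormander's $L^\infty$ bound, and set $t\asymp X^{-2/d}\log X$.

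There is one genuine (if small) gap in your tail argument. You write that the choice $t\asymp(\log X)/\lambda_X$ makes $e^{-\lambda_k t}\le X^{-A}$ for $k>X$, ``killing the tail against any polynomial factor.'' But your own bound on $c_k$ is $c_k\lesssim \lambda_k^{(d-1)/2}\bigl(\sum_n a_n\bigr)^2$, and $\bigl(\sum_n a_n\bigr)^2$ is \emph{not} a polynomial in $X$: it can be arbitrarily large relative to $X$, and the implicit constant $A$ in $t$ must depend only on $(M,g)$, not on the weights. Your parenthetical remark about the $k=0$ term handles only bounded $X$, not this issue. The fix is exactly what the paper does first: since the $k=0$ term already contributes $\gtrsim\bigl(\sum_i a_i\bigr)^2$, one may assume
\[
X\;\gtrsim\;\frac{\bigl(\sum_i a_i\bigr)^2}{\sum_i a_i^2},
\]
which converts $\bigl(\sum_i a_i\bigr)^2$ in the tail into $\lesssim X\sum_i a_i^2$, a quantity that a single fixed choice of $A$ can absorb. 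With this reduction inserted, your argument is complete and matches the paper's.
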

It seems likely that the logarithm is an artifact of the method; the result is more general (but logarithmically worse) than the classical Montgomery Lemma on $\mathbb{T}^d$ and the version on the sphere \cite{bil2}
since it allows for nonnegative weights: the classical
proofs of Montgomery's Lemma, both on $\mathbb{T}^d$ and $\mathbb{S}^{d}$, fails in this more general setting. The last author has shown \cite{stein2} that, for $N$ sufficiently large, one of the summands for $X \leq c_d N$ is nonzero (where $c_d$ does not depend on the manifold).\\
Theorem 1 has various implications: one would naturally assume that as soon
as $X \gtrsim N$, the eigenfunctions should be fairly decoupled from the set of points and each single summand should be roughly of order
$\sim N$: the theorem shows this basic intuition to be true up to logarithmic factors. Another application concerns the limits of numerical integration:
the Laplacian eigenfunctions $\phi_k$ have mean value 0 as soon as $k \geq 1$ and are oscillating rather slowly. One would, of course, expect
it to be possible for $N$ points to integrate $\sim N$ functions exactly but, simultaneously, one would not expect such a rule to be able to do well
on a larger set of (mutually orthogonal) functions. This was shown to hold in \cite{stein2}, the formulation of Theorem~1 would lead to a more quantitative
result (akin to an estimate on the size of the unavoidable error, see also \cite{LuSt}).

\subsection{Spherical extensions of Montgomery's  Lemma}

We now restrict our attention to the case when  $M = \SS^d$ is  the unit sphere in $\RR^{d+1}$ equipped with the normalized Haar measure $\s$. Denote by $\mathcal{H}_n$ the space of all spherical harmonics of degree $n$ on $\SS^d$,
and let $\{Y_{n,k}:~ k=1, 2,\cdots, d_n\}$ be a real orthonormal basis of $\mathcal{H}_n$ (recall $\mbox{dim}~ \mathcal{H}_n \sim n^{d-1}$).  We have the following spherical analogue of \eqref{e.St}.

\begin{theorem}\label{thm-1-1} For $\{x_1, \cdots, x_N\}\subset \SS^d$, we have for all $L \in \mathbb{N}$
\begin{equation}\label{1-1}
   \sum_{n=0}^L \sum_{k=1}^{d_n} \Bl|\sum_{j=1}^N Y_{n,k}(x_j)\Br|^2 \ge c_d L^d  \sum_{i,j=1}^N \f { \log ( 2 + L \|x_i-x_j\|) }{(1+ L\|x_i-x_j\|)^{d+1}}.
\end{equation}

\end{theorem}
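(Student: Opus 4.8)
The plan is to expand the left-hand side of \eqref{1-1} using the addition formula for spherical harmonics and then extract a positive-definite kernel whose diagonal and off-diagonal behavior match the claimed bound. Recall that for each $n$, the reproducing kernel of $\mathcal{H}_n$ satisfies $\sum_{k=1}^{d_n} Y_{n,k}(x)Y_{n,k}(y) = d_n\, P_n^{(d)}(\langle x,y\rangle)$, where $P_n^{(d)}$ is the (normalized) Gegenbauer polynomial. Hence the double sum equals $\sum_{i,j=1}^N K_L(\langle x_i, x_j\rangle)$ where $K_L(t) = \sum_{n=0}^L d_n P_n^{(d)}(t)$. The strategy is \emph{not} to keep all of $K_L$ but to insert a nonnegative multiplier: choose coefficients $0 \le \lambda_n \le 1$ (for instance a smooth bump of Cesàro--de la Vall\'ee-Poussin type supported on $n \le L$, or simply $\lambda_n = 1$) so that $\widetilde K_L(t) := \sum_{n=0}^L \lambda_n d_n P_n^{(d)}(t)$ is \emph{pointwise nonnegative} on $[-1,1]$. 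Then, because $\widetilde K_L$ is positive definite and the multipliers are bounded by $1$, one gets the one-sided comparison
\begin{equation*}
  \sum_{n=0}^L \sum_{k=1}^{d_n} \Bl|\sum_{j=1}^N Y_{n,k}(x_j)\Br|^2 \;=\; \sum_{i,j=1}^N K_L(\langle x_i,x_j\rangle) \;\ge\; \sum_{i,j=1}^N \widetilde K_L(\langle x_i,x_j\rangle),
\end{equation*}
reducing everything to a pointwise lower bound $\widetilde K_L(\langle x,y\rangle) \gtrsim_d L^d \frac{\log(2+L\|x-y\|)}{(1+L\|x-y\|)^{d+1}}$.

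The core of the argument is thus the asymptotic analysis of the Dirichlet-type kernel $\widetilde K_L(\cos\theta)$ as a function of the geodesic distance $\theta = \|x-y\|$ (up to the usual comparison $\|x-y\| \asymp \theta$ for $\theta$ bounded). For $\theta \lesssim 1/L$ one uses $P_n^{(d)}(\cos\theta) = 1 + O(n^2\theta^2)$ and $d_n \asymp n^{d-1}$ to see $\widetilde K_L(\cos\theta) \asymp \sum_{n \le L} n^{d-1} \asymp L^d$, which matches the right side since there the fraction is $\asymp L^d$. For $1/L \lesssim \theta \le \pi$ one invokes the classical Mehler--Heine/Darboux asymptotics for Gegenbauer polynomials: $d_n P_n^{(d)}(\cos\theta) \asymp n^{(d-1)/2} \theta^{-(d-1)/2} \cos(n\theta + \gamma_d(\theta))$ plus lower-order terms, valid for $\theta$ away from $0$ and $\pi$. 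Summing in $n$ is then an exercise in estimating oscillatory sums: with a smooth cutoff $\lambda_n$ one gets rapid decay and the main term becomes $\asymp L^{(d-1)/2}\theta^{-(d+1)/2}$ times the Fourier transform of the cutoff evaluated at $L\theta$, which is far smaller than the claimed bound — so the logarithmic-gain estimate cannot come from a single smooth kernel. This is the main obstacle: to \emph{produce} the logarithm one should not fully smooth the sum. The right move is to sum \emph{dyadically}: write $\widetilde K_L = \sum_{2^m \le L} \Delta_m$ where $\Delta_m$ is a smooth Littlewood--Paley piece at scale $2^m$, estimate $|\Delta_m(\cos\theta)| \lesssim 2^{m(d+1)/2}\theta^{-(d+1)/2} (2^m\theta)^{-A}$ for $2^m\theta \gtrsim 1$ and $\asymp 2^{md}$ for $2^m \theta \lesssim 1$, and — crucially — keep \emph{all} pieces with $2^m \lesssim 1/\theta$ undamped. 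Their contribution telescopes, and the number of such dyadic scales between a fixed constant and $1/\theta$, namely $\asymp \log(2+1/\theta)$ — or in the normalized variable $\asymp \log(2 + L\|x_i - x_j\|)/\log$-free form after rescaling — is exactly what furnishes the logarithmic factor; the tail $2^m \gtrsim 1/\theta$ sums to $\lesssim \theta^{-d}$, consistent with (and dominated by, modulo the log) the target $L^d(1+L\theta)^{-(d+1)}$ when combined correctly.

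Concretely, I would organize the write-up as follows. \textbf{Step 1:} reduce to the nonnegative kernel $\widetilde K_L$ as above and record the diagonal estimate $\widetilde K_L(1) \asymp L^d$, which handles pairs with $\|x_i - x_j\| \lesssim 1/L$ (in particular the $N^2$-sized diagonal contribution $i = j$, consistent with the right side). \textbf{Step 2:} for $\|x_i - x_j\| \gtrsim 1/L$, change to the variable $t = L\|x_i-x_j\|$ and prove the pointwise bound $\widetilde K_L(\cos\theta) \gtrsim_d L^d \frac{\log(2+t)}{(1+t)^{d+1}}$ by the dyadic decomposition of the Gegenbauer sum, using the uniform asymptotics of $P_n^{(d)}$ (e.g. from Szeg\H{o}'s book or the Hilb-type formula) to control each dyadic block, summing the undamped low blocks to get the logarithm and bounding the high-frequency tail by $O(\theta^{-d})$. \textbf{Step 3:} sum over $i,j$. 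The main technical obstacle is Step 2: ensuring that the dyadic pieces genuinely add \emph{constructively} in a lower bound (an upper bound on each block is not enough for a lower bound on the sum). The standard fix is to not estimate $\widetilde K_L$ directly but to note that the full Dirichlet kernel $D_L^{(d)}(\cos\theta) = \sum_{n \le L} d_n P_n^{(d)}(\cos\theta)$ is known explicitly (a Christoffel--Darboux expression) and is, up to smooth positive weights, comparable to $L^d(1+L\theta)^{-(d+1)}$ with an honest extra $\log$ arising from the boundary term in the Abel/partial-summation step — so the logarithm is a genuine feature of the \emph{sharp-cutoff} Dirichlet kernel, not a loss. I would therefore, in the final version, work with $D_L^{(d)}$ or a Ces\`aro mean of critical order $\delta = (d-1)/2$ and extract the logarithmic gain from the precise Christoffel--Darboux/Hilb asymptotics rather than from a crude dyadic sum, which is cleaner and avoids the constructive-interference issue entirely.
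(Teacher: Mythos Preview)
Your reduction via the addition formula and the idea of replacing the Dirichlet coefficients by smaller nonnegative multipliers $0\le\lambda_n\le 1$ so as to obtain a pointwise nonnegative kernel is correct and is exactly how the paper begins (with $\lambda_n = A_{L-n}^d/A_L^d$, the Ces\`aro weights of order $d$, so that $\widetilde K_L = K_L^d \ge 0$ by Kogbetliantz). You also correctly identify the central obstacle: a single nonnegative kernel at scale $L$ cannot satisfy a pointwise lower bound with the extra logarithm, and dyadic pieces are not individually nonnegative so they cannot be discarded at will in a lower bound.

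The gap is in your proposed resolution. Neither of your two suggestions works. The sharp-cutoff Dirichlet kernel $D_L^{(d)}(\cos\theta)=\sum_{n\le L} d_n P_n^{(d)}(\cos\theta)$ is \emph{not} pointwise comparable to $L^d(1+L\theta)^{-(d+1)}$: for $L\theta\gtrsim 1$ it behaves like $L^{(d-1)/2}\theta^{-(d+1)/2}\cos(L\theta+\text{phase})$ and in particular takes negative values of the same magnitude as its positive values, so it cannot serve as a lower bound in the multiplier step. Likewise, the Ces\`aro means of critical order $\delta=(d-1)/2$ are bounded but not nonnegative; Kogbetliantz's positivity requires $\delta\ge d$. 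So both candidates fail the very property (pointwise nonnegativity) that makes the comparison $\sum_n(\cdots)\ge\sum_{i,j}\widetilde K_L$ valid.

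The missing idea, which the paper uses, is to exploit the \emph{monotonicity} of the left-hand side in $L$ rather than to look for a single clever kernel. Since each term $\sum_k|\sum_j Y_{n,k}(x_j)|^2$ is nonnegative, one has
\[
\sum_{n=0}^L(\cdots)\;\ge\;\frac{1}{L+1}\sum_{m=0}^L\sum_{n=0}^m(\cdots)\;\ge\;\frac{1}{L+1}\sum_{m=0}^L\sum_{i,j}K_m^d(x_i\cdot x_j)\;=\;\sum_{i,j}G_L^{d+1}(x_i\cdot x_j),
\]
and then one averages once more to get $G_L^{d+2}$. Now the pointwise lower bound is easy and uses only $K_m^d\ge 0$ together with $K_m^d(\cos\theta)\asymp m^d$ for $m\theta\le 1/2$ (Bernstein's inequality): the first averaging gives $G_L^{d+1}(\cos\theta)\gtrsim L^{-1}\sum_{m\le 1/(2\theta)}m^d\asymp L^{-1}\theta^{-d-1}$, and the second averaging produces $G_L^{d+2}(\cos\theta)\gtrsim L^{-1}\sum_{\theta^{-1}\le m\le L}m^d(m\theta)^{-d-1}=L^{-1}\theta^{-d-1}\sum_{\theta^{-1}\le m\le L}m^{-1}\asymp L^{-1}\theta^{-d-1}\log(L\theta)$. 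The logarithm thus arises from summing $1/m$ over \emph{truncation scales} $m$, not from any internal structure of a fixed-scale kernel. This is the mechanism your dyadic heuristic was groping toward, but carried out on the outside (averaging separate nonnegative kernels $K_m^d$) rather than on the inside (decomposing one kernel into signed pieces), which is exactly what sidesteps the constructive-interference problem you flagged.
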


We observe that the left-hand side runs over $\sim L^d$ terms. Leaving just the diagonal terms ($i=j$) on the right-hand side one finds that the right-hand side is at least of the order   $\sim N L^d$, i.e. \eqref{1-1} is stronger than \eqref{e.bd}.  
Similar to the case of the torus, this result has immediate applications irregularities of distribution on the sphere. We provide refinements of both classical \cite{Beck2} and recent \cite{bil2} 
discrepancy bounds. Moreover,  with the help of the Stolarsky principle and its generalizations \cite{stol,bil2}, see \eqref{e.stol}-\eqref{e.gstol}, we obtain estimates on the the difference between discrete energies and energy integrals. These corollaries are gathered and proved in \S \ref{s.cor}.

\subsection{$L^2-$spherical cap discrepancy.}
We wish to highlight a particular implication that refines of a famous result of J. Beck \cite{Beck2}. The $L^2-$spherical cap discrepancy is defined as the $L^2-$norm of the spherical cap discrepancy (i.e. the difference between the empirical distribution of $N$ points and the uniform distribution) integrated over all radii (we refer to \S \ref{s.cor} for a more formal definition). The result of Beck states that for any set $Z$  of $N$ points on $\mathbb{S}^{d}$
$$
D_{L^2, \textup{cap}} (Z)  \gtrsim_d N^{-\frac12-\frac{1}{2d}}
$$
and this is sharp up to a logarithmic factor. Our approach yields a slight refinement.
\begin{theorem}\label{t.beck+} For any set of $N$ points $Z=\left\{z_1, \dots, z_N\right\} \subset \mathbb{S}^d$
$$D_{L^2, \textup{cap}} (Z) \gtrsim_d N^{-\frac12-\frac1{2d}} \left( \frac{1}{N} \sum_{i,j=1}^N \f {\log \,( 2 + N^{1/d} \|z_i-z_j\|)}{(1+ N^{1/d}\|z_i-z_j\|)^{d+1}}\right)^{1/2}.$$
\end{theorem}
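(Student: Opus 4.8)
The plan is to derive Theorem~\ref{t.beck+} from the refined Montgomery inequality \eqref{1-1} together with the Stolarsky-type spectral expansion of the $L^2$-spherical cap discrepancy, using a single well-chosen scale $L\asymp N^{1/d}$ in the spirit of Beck's original argument but retaining the entire right-hand side of \eqref{1-1}. Writing $S_n:=\sum_{k=1}^{d_n}\bigl|\sum_{j=1}^N Y_{n,k}(z_j)\bigr|^2$, the first step I would carry out is to record the identity
$$ N^2\, D_{L^2,\mathrm{cap}}(Z)^2 \;=\; \sum_{n=1}^{\infty}\Gamma_n\, S_n, \qquad \Gamma_n=\Gamma_n(d)>0,\quad \Gamma_n\asymp_d n^{-(d+1)}. $$
This follows by expanding the cap-discrepancy function into spherical harmonics and applying the Funk--Hecke formula to the spherical-cap indicators (equivalently, via the Stolarsky invariance principle for the Euclidean distance kernel $\|x-y\|$, whose Gegenbauer coefficients are positive and of order $n^{-(d+1)}$), and it is the step that produces the exponent $d+1$ in the statement: the $n$-th frequency block of the discrepancy is, up to the explicit weight $\Gamma_n$, precisely the square of the Montgomery inner sum.

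With this in hand the argument is short. For any $L\in\NN$, since all $\Gamma_n$ are positive and $\Gamma_n\ge c_1 n^{-(d+1)}\ge c_1 L^{-(d+1)}$ whenever $n\le L$,
$$ N^2\, D_{L^2,\mathrm{cap}}(Z)^2 \;\ge\; \sum_{n=1}^{L}\Gamma_n S_n \;\ge\; c_1\, L^{-(d+1)}\sum_{n=1}^{L}S_n \;=\; c_1\, L^{-(d+1)}\Bigl(\sum_{n=0}^{L}S_n-N^2\Bigr), $$
using $S_0=\bigl|\sum_{j=1}^N Y_{0,1}(z_j)\bigr|^2=N^2$. Theorem~\ref{thm-1-1} gives $\sum_{n=0}^{L}S_n\ge c_d L^d E_L$, where $E_L:=\sum_{i,j=1}^N \frac{\log(2+L\|z_i-z_j\|)}{(1+L\|z_i-z_j\|)^{d+1}}$ already satisfies $E_L\ge(\log 2)N$ from the diagonal.

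Finally I would take $L$ to be the smallest integer with $c_d(\log 2)L^d\ge 2N$, so that $L\asymp_d N^{1/d}$ and $N^2\le\tfrac12 c_d L^d E_L$, whence $\sum_{n=0}^{L}S_n-N^2\ge\tfrac12 c_d L^d E_L$ and
$$ N^2\, D_{L^2,\mathrm{cap}}(Z)^2 \;\gtrsim_d\; L^{-(d+1)}\cdot L^d E_L \;=\; L^{-1}E_L \;\gtrsim_d\; N^{-1/d} E_L. $$
Since $L\asymp_d N^{1/d}$, the weights $\frac{\log(2+Lt)}{(1+Lt)^{d+1}}$ and $\frac{\log(2+N^{1/d}t)}{(1+N^{1/d}t)^{d+1}}$ are comparable uniformly for $t\ge 0$, so $E_L\asymp_d\sum_{i,j=1}^N \frac{\log(2+N^{1/d}\|z_i-z_j\|)}{(1+N^{1/d}\|z_i-z_j\|)^{d+1}}$; dividing by $N^2$ and taking square roots yields the claim, with the finitely many small $N$ for which $L\not\asymp_d N^{1/d}$ absorbed into the implied constant. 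The only genuinely substantive point is the spectral identity of the first step — in particular the two-sided estimate $\Gamma_n\asymp_d n^{-(d+1)}$ together with its correct normalization against $S_n$; after that, Theorem~\ref{thm-1-1} does all of the work and, crucially, supplies the energy factor that the cap discrepancy inherits.
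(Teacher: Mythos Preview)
Your proposal is correct and follows essentially the same route as the paper: expand the cap discrepancy into spherical harmonics via Funk--Hecke (with the extra average in $\tau$) to obtain coefficients $\Gamma_n\asymp_d n^{-(d+1)}$, truncate at $L\asymp N^{1/d}$ chosen large enough that the $S_0=N^2$ term is absorbed by the diagonal part of~\eqref{1-1}, and then invoke Theorem~\ref{thm-1-1}. This is exactly the argument the paper sketches in \S\ref{s.cor} leading up to Corollary~\ref{c.3}; your version is simply a bit more explicit about how $L$ is chosen to handle the $-N^2$ subtraction.
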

We remark that summing over the diagonal $i=j$ shows that the additional factor is $\gtrsim 1$ implying Beck's original result. However, as soon as there is subtle clustering of points, the off-diagonal terms may actually contribute a nontrivial quantity.

%\section{Proof}

\section{Montgomery Lemma on general manifolds: proof of Theorem \ref{t.1}.}
\begin{proof} We first observe that the eigenfunction $\phi_0 \equiv 1/\sqrt{|M|}$ is constant and thus
$$  \sum_{k=0}^{X} \left| \sum_{i=1}^{N}{ a_i \phi_k(x_i)} \right|^2 \gtrsim_{(M,g)}  \left(\sum_{i=1}^{N}{ a_i}\right)^2 = \frac{ \left(\sum_{i=1}^{N}{ a_i}\right)^2}{ \sum_{i=1}^{N}{ a_i^2}} \sum_{i=1}^{N}{ a_i^2}$$
and it thus suffices to prove the statement for
$$ X \gtrsim  \frac{ \left(\sum_{i=1}^{N}{ a_i}\right)^2}{ \sum_{i=1}^{N}{ a_i^2}}.$$
The proof starts by bounding the desired quantity from below; here, we let $t>0$ be an arbitrary number that will be fixed later.
\begin{align*}
 \sum_{k=0}^{X} \left| \sum_{i=1}^{N}{ a_i \phi_k(x_i)} \right|^2  &\geq  \sum_{k=0}^{X} e^{-\lambda_k t} \left| \sum_{i=1}^{N}{a_i \phi_k(x_i)} \right|^2  \\
&=   \sum_{k=0}^{X} e^{-\lambda_k t}  \sum_{i,j=1}^{N}{ a_i \phi_k(x_i) a_j \phi_k(x_j) } \\
&=  \sum_{i,j=1}^{N}{ a_i a_j \sum_{k=0}^{X} e^{-\lambda_k t} \phi_k(x_i) \phi_k(x_j)}.
\end{align*}
Here and throughout the proof, the $\ld_k$  denote the eigenvalues of $-\Delta_g$ such that $-\Delta_g \phi_k =\ld_k \phi_k$
and $0=\ld_0\leq \ld_1\leq \ld_2\leq \cdots$.
The inner sum is now close to a classical expansion for the heat kernel
$$ p_t(x,y) = \sum_{k=0}^{\infty} e^{-\lambda_k t} \phi_k(x) \phi_k(y).$$
This means that we can replace the inner sum by the heat kernel while incurring an error that only depends on the size of $X$. We will now make this precise: the main
ingredients are Weyl's law $ \lambda_k \sim c_{M} k^{2/d},$
where $c_{M}$ only depends on the volume of the manifold $M$ and H\"ormander's estimate \cite{hor}
$$ \|\phi_k\|_{L^{\infty}} \lesssim_{(M,g)} \lambda_k^{\frac{d-1}{4}}.$$
Combining these two inequalities, we can now estimate the tail:
\begin{align*}
 \left| \sum_{k=X+1}^{\infty} e^{-\lambda_k t} \phi_k(x_i) \phi_k(x_j) \right| &\lesssim_{(M,g)}  \sum_{k=X+1}^{\infty} \left|e^{- c k^{\frac{2}{d}} t} \phi_k(x_i) \phi_k(x_j) \right| \\
&\leq   \sum_{k=X+1}^{\infty} e^{- c k^{\frac{2}{d}} t} \|\phi_k\|_{L^{\infty}}^2\\
&\lesssim_{(M,g)} \sum_{k=X+1}^{\infty} e^{- ck^{\frac{2}{d}} t} \lambda_k^{\frac{d-1}{2}} \\
&\lesssim_{(M,g)} \sum_{k=X+1}^{\infty} e^{-c k^{\frac{2}{d}} t} k^{1 - \frac{1}{d}} .
\end{align*}
This quantity can be bounded from above by an integral which, after substitution, reduces to the incomplete Gamma function:
\begin{align*}
   \sum_{k=X+1}^{\infty} e^{- ck^{2/d} t}    k^{1 - \frac{1}{d}} &\leq  \int_{X}^{\infty} e^{- \left( \frac{y}{t^{-d/2}}\right)^{\frac{2}{d}} } y^{1 - \frac{1}{d}} dy \\
&=\frac{1}{t^{d-\frac{1}{2}}} \int_{cX t^{d/2}}^{\infty} e^{- z^{\frac{2}{d}} } z^{1 - \frac{1}{d}} dz \\
&=\frac d2 \frac{1}{t^{d-\frac{1}{2}}}  \Gamma\left(d-\frac{1}{2}, cX^{\frac{2}{d}} t\right).
\end{align*}
We will end up working in the regime $X^{\frac{2}{d}} t \gg 1$. In this regime, there is a classical asymptotic (see e.g. Abramowitz \& Stegun \cite[\S 6.5]{abra}), valid for $a \gg 1$,
$$  \Gamma\left(d-\frac{1}{2}, a \right) \lesssim_{d} a^{d-\frac{3}{2}} e^{-a}.$$
Altogether, this implies, since we may assume that
$$  X \gtrsim  \frac{ \left(\sum_{i=1}^{N}{ a_i}\right)^2}{ \sum_{i=1}^{N}{ a_i^2}},$$
the bound
\begin{align*}
  \sum_{k=0}^{X} \left| \sum_{i=1}^{N}{ a_i \phi_k(x_i)} \right|^2  &\gtrsim \sum_{i,j=1}^{N}{a_i a_j p_t(x_i, x_j) } -  C\sum_{i,j =1}^{N}{ \frac{ a_i a_j }{t^{d-\frac{1}{2}}} \left( X^{\frac{2}{d}} t\right)^{d - \frac{3}{2}} \exp\left(-c X^{\frac{2}{d}} t\right)} \\
&= \left( \sum_{i,j=1}^{N}{a_i a_j p_t(x_i, x_j) } \right) -  C\frac{ \left( \sum_{i=1}^{N}{a_i}\right)^2 }{t^{d-\frac{1}{2}}} \left( X^{\frac{2}{d}} t\right)^{d - \frac{3}{2}} \exp\left(- cX^{\frac{2}{d}} t\right) \\
&\gtrsim  \sum_{i,j=1}^{N}{a_i a_j p_t(x_i, x_j) } -  C\frac{ X \sum_{i=1}^{N}{a_i^2} }{t^{d-\frac{1}{2}}} \left( X^{\frac{2}{d}} t\right)^{d - \frac{3}{2}} \exp\left(-c X^{\frac{2}{d}} t\right)
\end{align*}
We will end up working at time $t \sim X^{-\frac{2}{d}} \log{X} \ll 1$ which, for $X$ sufficiently large, enables us to make use of Varadhan's short-time asymptotics
$$ p_t(x,y) \sim \frac{1}{(4 \pi t)^{d/2}} \exp\left( - \frac{\|x-y\|^2}{4t} \right)$$
to argue that
$$ \sum_{i,j=1}^{N}{a_i a_j p_t(x_i, x_j) }  \geq \sum_{i=1}^{N}{a_i^2 p_t(x_i, x_i) } \gtrsim t^{-\frac{d}{2}} \sum_{i=1}^{N}{a_i^2}.$$
Summarizing, we have
$$   \sum_{k=0}^{X} \left| \sum_{i=1}^{N}{ a_i \phi_k(x_i)} \right|^2 \gtrsim_{(M,g)} \sum_{i=1}^{N}{a_i^2} \left[ t^{-\frac{d}{2}} - \frac{C X  }{t^{d-\frac{1}{2}}} \left( X^{\frac{2}{d}} t\right)^{d - \frac{3}{2}} \exp\left(-c X^{\frac{2}{d}} t \right)  \right].$$
Setting $t =A X^{-\frac{2}{d}} \log{X}$ with $A=\f 1c (1-\f 1d) +1$ now implies the result.
\end{proof}

%%%%%%%%%%%%%%%%%%%%%%%%%%%%%%%%%%%%%%%%%%%%%%%%%%
%%%%  HERE STARTS THE STUFF BY FENG AND DIMA
%%%%%%%%%%%%%%%%%%%%%%%%%%%%%%%%%%%%%%%%%%%%%%%%%%

\section{An improved Montgomery Lemma on the sphere:\\ proof of Theorem \ref{thm-1-1}}

Let $C_n^\lambda$ denote the Gegenbauer (ultraspherical) polynomials of degree $n$, which are orthogonal on $[-1,1]$ with respect to the weight $w_\lambda(t) = (1-t^2)^{\lambda - 1/2} $ (see \cite{DX} for the backgound information). Since we are working on $\mathbb S^d$, we set  $\ld=\f {d-1}2$. Denote also $E_n^\ld(t)=\f {n+\ld}{\ld} C_n^\ld (t)$.  For $\da>0$, we define the Ces\`aro-type kernel 
$$ K_L^\da (t):=\sum_{k=0}^L \f {A_{L-k}^\da}{A_L^\da} E_k^\ld (t),\   \   \  \text{with}\   \  A_j^\da=\f {\Ga(j+\da+1)}{\Ga(j+1)\Ga(\da+1)}.$$

It is a classical result of Kogbetliantz \cite{kog} (see also \cite{reim}) that $K_L^\delta (t) \ge 0$ on $[-1,1]$, whenever $\delta \ge d$. 

\begin{lem}\label{lem-1-1} For $\{x_1, \cdots, x_N\}\subset \SS^d$ and any $\da>0$, we have
$$ \sum_{k=1}^{d_n} |\sum_{j=1}^N Y_{n,k}(x_j)|^2 =\sum_{i,j=1}^N E_n^\ld (x_i\cdot x_j)\ge 0,\   \    \  n=0,1,\cdots, $$
and
\begin{equation}\label{1-2}
   \sum_{n=0}^L \sum_{k=1}^{d_n} |\sum_{j=1}^N Y_{n,k}(x_j)|^2 \ge \sum_{i,j=1}^N K_{L}^\da (x_i\cdot x_j).
\end{equation}
\end{lem}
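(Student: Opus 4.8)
The plan is to reduce everything to two classical facts: the addition formula for spherical harmonics and the positivity of the Ces\`aro kernel $K_L^\da$ for $\da\ge d$.

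First I would establish the pointwise identity. The addition formula on $\SS^d$ states that, for the $L^2(\s)$-orthonormal basis $\{Y_{n,k}\}_{k=1}^{d_n}$ of $\mathcal H_n$, one has
$$\sum_{k=1}^{d_n} Y_{n,k}(x)Y_{n,k}(y) = \f{n+\ld}{\ld}\,C_n^\ld(x\cdot y) = E_n^\ld(x\cdot y),$$
with the normalization chosen so that the constant $c_d$ absorbing $d_n/(\text{value at }t=1)$ is exactly $1$; this is precisely the normalization $\ld=\f{d-1}2$ and $E_n^\ld(t)=\f{n+\ld}{\ld}C_n^\ld(t)$ fixed in the paragraph preceding the lemma. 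Squaring and summing the coordinates of the vector $\big(\sum_j Y_{n,k}(x_j)\big)_{k=1}^{d_n}$ gives
$$\sum_{k=1}^{d_n}\Bl|\sum_{j=1}^N Y_{n,k}(x_j)\Br|^2 = \sum_{i,j=1}^N \sum_{k=1}^{d_n} Y_{n,k}(x_i)Y_{n,k}(x_j) = \sum_{i,j=1}^N E_n^\ld(x_i\cdot x_j),$$
and the left-hand side is manifestly a sum of squares of real numbers, hence $\ge 0$; this proves the first displayed assertion of Lemma~\ref{lem-1-1}.

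Second, for \eqref{1-2} I would form the weighted partial sum defining $K_L^\da$. Multiplying the identity just proved by the nonnegative Ces\`aro weights $A_{L-n}^\da/A_L^\da$ (nonnegative since $\da>0$ makes all $A_j^\da>0$, and $0\le A_{L-n}^\da\le A_L^\da$ for $0\le n\le L$) and summing over $n=0,\dots,L$ yields
$$\sum_{n=0}^L \f{A_{L-n}^\da}{A_L^\da}\sum_{k=1}^{d_n}\Bl|\sum_{j=1}^N Y_{n,k}(x_j)\Br|^2 = \sum_{i,j=1}^N \sum_{n=0}^L \f{A_{L-n}^\da}{A_L^\da} E_n^\ld(x_i\cdot x_j) = \sum_{i,j=1}^N K_L^\da(x_i\cdot x_j).$$
Since every coefficient $A_{L-n}^\da/A_L^\da$ lies in $[0,1]$, the left-hand side is bounded above by $\sum_{n=0}^L \sum_{k=1}^{d_n}\big|\sum_j Y_{n,k}(x_j)\big|^2$, which gives \eqref{1-2}. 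Note this step does not even need the Kogbetliantz positivity of $K_L^\da$ — that is saved for the next stage of the argument, where one estimates $\sum_{i,j}K_L^\da(x_i\cdot x_j)$ from below after choosing $\da=d$ (or any $\da\ge d$) so that the individual summands can be handled.

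The only genuinely delicate point is bookkeeping the normalization constant in the addition formula so that it matches the definition $E_n^\ld(t)=\f{n+\ld}{\ld}C_n^\ld(t)$ exactly, rather than up to a dimensional constant; everything else is a one-line manipulation. I would double-check this against a standard reference (e.g.\ \cite{DX}), recalling that $\dim\mathcal H_n = \f{2n+d-1}{n}\binom{n+d-2}{n-1}$ and $C_n^\ld(1)=\binom{n+2\ld-1}{n}$, whose ratio is precisely $\f{n+\ld}{\ld}=\f{2n+d-1}{d-1}$ when $\ld=\f{d-1}2$, confirming the stated identity with constant $1$.
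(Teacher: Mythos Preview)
Your proof is correct and follows essentially the same route as the paper: apply the addition formula to get $\sum_k|\sum_j Y_{n,k}(x_j)|^2=\sum_{i,j}E_n^\ld(x_i\cdot x_j)\ge 0$, then use that the Ces\`aro weights $A_{L-n}^\da/A_L^\da$ lie in $[0,1]$ together with the nonnegativity of each $n$-summand to bound the unweighted sum from below by the weighted one, which equals $\sum_{i,j}K_L^\da(x_i\cdot x_j)$. Your extra remarks on the normalization constant and on Kogbetliantz positivity being unnecessary at this stage are accurate and go slightly beyond what the paper spells out.
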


This lemma follows directly from  the addition formula for spherical harmonics. We include the proof here for the sake of completeness.

\begin{proof}By the addition formula for spherical harmonics, we have \begin{align*}
                   \sum_{k=1}^{d_n} |\sum_{j=1}^N Y_{n,k}(x_j)|^2&=  \sum_{k=1}^{d_n} \sum_{i=1}^N \sum_{j=1}^N Y_{n,k}(x_i)Y_{n,k}(x_j)=\sum_{i,j=1}^N \sum_{k=1}^{d_n} Y_{n,k}(x_i)Y_{n,k}(x_j)\\
                   &=\sum_{i,j=1}^N E_n^\ld (x_i\cdot x_j).
                   \end{align*}
                   This also implies that
                   \begin{align*}
                     \sum_{n=0}^L \sum_{k=1}^{d_n} |\sum_{j=1}^N Y_{n,k}(x_j)|^2&= \sum_{n=0}^L \sum_{i,j=1}^N E_n^\ld(x_i\cdot x_j)
                      \ge \sum_{n=0}^L \f {A_{L-n}^\da}{A_{L}^\da} \sum_{i,j=1}^N E_n^\ld(x_i\cdot x_j)\\
                      &= \sum_{i,j=1}^N \sum_{n=0}^L \f {A_{L-n}^\da}{A_{L}^\da} E_n^\ld(x_i\cdot x_j)
                      =\sum_{i,j=1}^N K_{L}^\da (x_i\cdot x_j).
                   \end{align*}
\end{proof}

Numerical experiments suggest that $K_n^d$ is not just non-negative, but is actually strictly positive and should satisfy favorable lower bounds. However, we could not prove it, hence, as in \cite{stein1}, we shall make use of additional rounds of averaging.
Define
$$ G_n^{d+1}(t)=\f 1{n+1} \sum_{j=0}^n K_j^d(t) \,\,\, \textup{ and }\,\,\, G_n^{d+2}(t)=\f 1{n+1} \sum_{j=0}^n G_j^{d+1}(t).$$

\begin{lem}\label{lem-1-2}For $n\in \NN$ and $\t\in (0, \pi)$,
\begin{equation}\label{1-3-0}
    G_n^{d+2} (\cos\t) \ge C n^d (1+n\t)^{-d-1}\log (2+n\t).
\end{equation}

\end{lem}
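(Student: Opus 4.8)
\textbf{Proof proposal for Lemma \ref{lem-1-2}.}

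The plan is to derive the lower bound on $G_n^{d+2}$ from the known non-negativity of the Ces\`aro kernels $K_j^d$ together with a more refined pointwise/averaged estimate on $K_j^d$ and on the single average $G_j^{d+1}$. The strategy mirrors the torus argument of \cite{stein1}: instead of trying to prove a favorable lower bound for a single kernel $K_n^d$ (which numerics suggest but we cannot establish), we exploit that one extra layer of Ces\`aro averaging smooths the oscillations, and a second layer produces a genuinely positive, quantitatively controlled kernel. Concretely, I would first record the standard asymptotics for the Ces\`aro kernel $K_L^\da(\cos\t)$ with $\da\ge d$: away from $\t=0$ it behaves like $L^{d-1}\t^{-\da-1}$ times an oscillatory factor (this is classical, see \cite{reim} or the Gegenbauer estimates in \cite{DX}), and near $\t=0$ it is of size $\sim L^d$. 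The non-negativity from Kogbetliantz \cite{kog} is what makes the averaging manipulations legitimate term by term.

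The key steps, in order: (i) fix $\t\in(0,\pi)$ and split the analysis according to whether $n\t\lesssim 1$ or $n\t\gg 1$. In the regime $n\t\lesssim 1$, all of $K_j^d(\cos\t)$ for $j\le n$ are comparable to $j^d$ by the near-diagonal behavior, so $G_n^{d+1}(\cos\t)\sim n^d$ and then $G_n^{d+2}(\cos\t)\sim n^d$; since $\log(2+n\t)\sim 1$ and $(1+n\t)^{-d-1}\sim 1$ there, \eqref{1-3-0} holds trivially. (ii) In the regime $n\t\gg 1$, I would use the averaged bound $G_j^{d+1}(\cos\t)\gtrsim j^{d}(j\t)^{-d-1}$ valid once $j\t\gtrsim 1$ — that is, the single Ces\`aro average already removes the oscillation and leaves a positive main term of this order (this is the analogue of the ``$X^2/(1+X^4\|x_i-x_j\|^4)$'' phenomenon in \eqref{e.St}). (iii) Then sum:
\begin{equation*}
G_n^{d+2}(\cos\t)=\f1{n+1}\sum_{j=0}^n G_j^{d+1}(\cos\t)\gtrsim \f1{n+1}\sum_{1/\t \lesssim j\le n} j^{d}(j\t)^{-d-1}=\f{\t^{-d-1}}{n+1}\sum_{1/\t\lesssim j\le n} j^{-1}.
\end{equation*}
The harmonic sum $\sum_{1/\t\lesssim j\le n} j^{-1}\sim \log(n\t)\sim\log(2+n\t)$, and $\t^{-d-1}/(n+1)\sim n^d(n\t)^{-d-1}\sim n^d(1+n\t)^{-d-1}$ in this regime, which is exactly \eqref{1-3-0}. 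The logarithm — the feature that distinguishes \eqref{1-1} from a bare $NL^d$ bound — thus arises precisely from integrating $1/j$ over the dyadic range $1/\t\le j\le n$.

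The main obstacle is step (ii): establishing that the \emph{first} Ces\`aro average $G_j^{d+1}(\cos\t)=\f1{j+1}\sum_{m=0}^j K_m^d(\cos\t)$ satisfies a clean lower bound of order $j^d(j\t)^{-d-1}$ for $j\t\gtrsim 1$. This requires more than the non-negativity of $K_m^d$; one needs the precise main-term/oscillatory-remainder decomposition of the Kogbetliantz kernel and must check that summing in $m$ the oscillatory part cancels down to lower order while the main part accumulates with a fixed sign. I would handle this via the known asymptotic expansion of $C_m^\ld(\cos\t)$ (Darboux-type formula), writing $K_m^d(\cos\t)$ as a main term plus error, using summation by parts in $m$ against the slowly varying coefficients $A_{j-m}^d/A_j^d$, and controlling the error uniformly. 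An alternative, and perhaps cleaner, route is to prove the bound directly for $G_n^{d+2}$ by a double summation-by-parts, never isolating $G^{d+1}$; but either way the crux is the same oscillatory-cancellation estimate for Gegenbauer sums. Once that is in hand, the rest is the elementary dyadic bookkeeping sketched above.
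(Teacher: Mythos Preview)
Your overall architecture --- treat $n\theta\lesssim 1$ via the near-diagonal size of $K_j^d$, establish $G_j^{d+1}(\cos\theta)\gtrsim j^d(1+j\theta)^{-d-1}$, then sum harmonically in $j$ to produce the logarithm --- is exactly the paper's argument, and your step (iii) matches it verbatim.

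Where you diverge is step (ii), and here you have made the problem much harder than it is. You assert that the lower bound on $G_j^{d+1}$ ``requires more than the non-negativity of $K_m^d$'' and propose Darboux asymptotics and summation by parts to extract a positive main term from oscillatory kernels. In fact non-negativity (Kogbetliantz) together with the very near-diagonal estimate you already invoked in step (i) is \emph{all} that is needed. The paper's trick: in
\[
G_n^{d+1}(\cos\theta)=\frac{1}{n+1}\sum_{m=0}^{n}K_m^d(\cos\theta),
\]
simply discard every term with $m>1/(2\theta)$ --- each is $\ge 0$ by Kogbetliantz, so this only decreases the sum. For the surviving indices $m\le 1/(2\theta)$ one has $\theta\le 1/(2m)$, hence by Bernstein's inequality $K_m^d(\cos\theta)\ge \tfrac12 K_m^d(1)\sim (m+1)^d$. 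Summing gives
\[
G_n^{d+1}(\cos\theta)\;\ge\;\frac{c}{n+1}\sum_{0\le m\le 1/(2\theta)}(m+1)^d\;\sim\;n^{-1}\theta^{-d-1}\;\sim\;n^d(1+n\theta)^{-d-1}
\]
in the regime $n\theta\gtrsim 1$. No asymptotic expansions, no cancellation analysis: the point is that the \emph{low-degree} summands $m\lesssim 1/\theta$ are in their own near-diagonal regime irrespective of how large $n\theta$ is, and positivity lets you throw the rest away. Your proposed route via Gegenbauer asymptotics might also succeed, but it is substantially more work and is not what the paper does.
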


{\emph{Remark:}}  It seems that \eqref{1-3-0} with $G_n^{d+1}$ in place of $G_n^{d+2}$ remains true, but the proof would be more involved (we prove a slightly weaker bound \eqref{1-5}).  % Nevertheless, Lemma \ref{lem-1-2} is already enough for our purpose.

\begin{proof}
  First, we recall that
  $ K_n^d (\cos\t)\ge 0$ for  $\t\in [0,\pi]$, and
      $\|K_n^d\|_\infty =K_n^d (1) \sim (n+1)^d$.
      It follows that for $\da=d+1$ or $ d+2$,
     $$\|G_n^\da \|_\infty =G_n^\da (1) \sim (n+1)^d.$$
      By Bernstein's inequality for trigonometric polynomials,  this also implies that for $F_n(t):=K_n^d(t)$ or  $ G_n^{d+1}(t)$ or  $ G_n^{d+2}(t)$, we have
     \begin{equation}\label{1-4}
         F_n (\cos \t) \ge \f 12 \|F_n\|_\infty \sim  (n+1)^d,\   \   0\leq \t\leq \f 1{2n}.
     \end{equation}

Next, we show that
\begin{equation}\label{1-5}
    G_n^{d+1} (\cos\t) \ge c n^d(1+n\t)^{-d-1},\  \  n\ge 1,\   \ \t\in [0,\pi].
\end{equation}
If $0\leq \t\leq \f 1{2n}$, then \eqref{1-5} follows directly from \eqref{1-4}.  For $\f 1{2n}\leq \t\leq \pi$, we have
  \begin{align*}
    G_n^{d+1} (\cos\t)& =\f 1{n+1} \sum_{j=0}^n K_j^d (\cos\t) \ge \f 1{n+1} \sum_{0\leq j \leq \f 1{2\t}} K_j^d (\cos\t)\\
    &\ge c \f 1{n+1} \sum_{0\leq j \leq \f 1{2\t}} (j+1)^d\sim n^{-1} \t^{-d-1} \sim n^d (1+n\t)^{-d-1}.
  \end{align*}

  Finally, we prove  estimate \eqref{1-3-0}. Note that \eqref{1-5} with $G_n^{d+2}$ in place of $G_n^{d+1}$ remains true. Thus,
  without loss of generality, we may assume that $\f {2}{n}\leq \t\leq \pi$ and $n\ge 10$. We then  have
 \begin{align*}
    G_n^{d+2} (\cos\t)& =\f 1{n+1} \sum_{j=0}^n G_j^{d+1} (\cos\t)\ge c n^{-1}\sum_{j=0}^n j^d(1+j\t)^{-d-1}\\
    &\ge c n^{-1}\sum_{\t^{-1}\leq j \leq n} j^{-1} \t^{-d-1}\ge c n^{-1}\t^{-d-1} \int_{\t^{-1}+1}^n \f {dt}t\\
    &=c n^{-1}\t^{-d-1} \int_{1+\t}^{n\t} \f {dt}t\sim n^d (1+n\t)^{-d-1} \log (n\t+2).
 \end{align*}
\end{proof}

\begin{proof}[Proof of Theorem \ref{thm-1-1}]
Using Lemma \ref{lem-1-1}, we have
\begin{align}
  \sum_{n=0}^L \sum_{k=1}^{d_n} |\sum_{j=1}^N Y_{n,k}(x_j)|^2 &
 \ge \f 1L \sum_{m=0}^L \sum_{n=0}^m \sum_{k=1}^{d_n} |\sum_{j=1}^N Y_{n,k}(x_j)|^2\notag\\
 &\ge \f 1L \sum_{m=0}^L \sum_{i,j=1}^N K_{m}^d (x_i\cdot x_j)=\sum_{i,j=1}^N G_{L}^{d+1} (x_i\cdot x_j).\label{1-6}
\end{align}
Using \eqref{1-6} and   averaging once again, we have
\begin{align*}
  \sum_{n=0}^L &\sum_{k=1}^{d_n} |\sum_{j=1}^N Y_{n,k}(x_j)|^2
 \ge \f 1L \sum_{m=0}^L \sum_{n=0}^m \sum_{k=1}^{d_n} |\sum_{j=1}^N Y_{n,k}(x_j)|^2\\
 &\ge \f 1L \sum_{m=0}^L \sum_{i,j=1}^N G_{m}^{d+1} (x_i\cdot x_j)=\sum_{i,j=1}^N G_{L}^{d+2} (x_i\cdot x_j),
\end{align*}
which, using \eqref{1-3-0}, implies the desired estimate \eqref{1-1}.

\end{proof}

\section{Some  corollaries for discrepancy and discrete energy of point distributions on the sphere}\label{s.cor} 

For  a finite set of points  $Z=\{z_1, \cdots, z_N\}\subset \sph$,  its $L^2-$discrepancy with respect to a function  $f: [-1,1] \rightarrow \mathbb R$  is defined as 
\begin{align}\label{e.dL2}
    D_{L^2, f}(Z) =\bigg( \int\limits_{\sph}\Bl| \f 1N \sum_{j=1}^N f(x\cdot z_j) -  \int\limits_{\sph}  f(x\cdot y)\, d\s(y) \Br|^2\, d\s(x)\bigg)^{\f12}.
\end{align}
In particular, when $f(t)= f_\tau (t)  = {\bf{1}}_{[\tau,1]} (t)$, one obtains the discrepancy with respect to spherical caps $C(x,\tau) = \{ y \in \sph:\, x\cdot y \ge \tau \}$ of aperture $\arccos \tau$, i.e. 
\begin{equation}
 D_{L^2, f_\tau}^2 (Z) =  \int\limits_{\sph}\Bl| \f 1N \sum_{j=1}^N {\bf{1}}_{C(x,\tau)} (  z_j) -  \sigma \big( C(x,\tau) \big)  \Br|^2\, d\s(x),
\end{equation}
Its  $L^2-$average    over the parameter $\tau$ yields the  classical $L^2-${\it{spherical cap discrepancy}} 
\begin{align}\label{e.disccap}
D_{L^2, \textup{cap}}^2 (Z) =     \int\limits_{-1}^1  D_{L^2, f_\tau}^2 (Z)  \, d\tau  , 
\end{align}
which has been extensively studied \cite{beck1,Beck2}. In particular, this quantity satisfies the following identity known as the {\it{Stolarsky principle}} \cite{stol}, which relates  it to a certain discrete energy.
\begin{equation}\label{e.stol}
c_d \, D_{L^2, \textup{cap}}^2 (Z)  = \int\limits_{\mathbb S^{d}} \int\limits_{\mathbb S^{d}} \|  x- y \| \, d\sigma (x)\, d\sigma (y)\,\,  - \,\, \frac{1}{N^2} \sum_{i,j = 1}^N \| z_i - z_j \| ,
\end{equation}
where $c_d$ is a dimensional constant.  It has been established in \cite{bil2,bil3} that  Stolarsky principle can be generalized in the following way: for $f \in L^2 \big([-1,1],  w_\lambda \big)$ %(1-t^2)^{\lambda - \frac12} \big)$
\begin{align}\label{e.gstol}
    D^2_{L^2, f}(Z) =  \frac{1}{N^2} \sum_{i=1}^N \sum_{j=1}^N F(z_i\cdot z_j) - \int\limits_{\sph}\int\limits_{\sph} F(x\cdot y)\, d\s(x) d\s(y),
\end{align}
where the function $F: [-1,1]\rightarrow \mathbb R$ is defined through the identity 
\begin{equation}\label{e.Ff}
\widehat{F} (n,\lambda) = \big( \widehat{f} (n,\lambda) \big)^2 .
\end{equation}  
Here and throughout the proof, 
$$ \wh{f}(n,\ld):=\f {(n+\ld)\Ga(\ld)}{\sqrt{\pi}\Ga(\ld+\f12) }  \int_{-1}^1 f(t) C_n^\ld(t) (1-t^2)^{\ld-\f12}\, dt.$$
It is now easy to see that the refined spherical Montgomery Lemma, Theorem \ref{thm-1-1},   provides new estimates both for the discrepancy and discrete energies. Setting 
$$\displaystyle{G(x) = \frac1{N} \sum_{j=1}^N f (x\cdot z_j)},~ \mbox{we see that} \quad
D_{L^2, f}(Z)  = \| G - \widehat{G} (0,\ld) \|_{L^2 (\sph, d\sigma)}$$ and, according to the Funk--Hecke formula, for any spherical harmonic $Y_n \in \mathcal H_n$
\begin{equation}
\langle G, Y_n \rangle = \frac1{N} \sum_{j=1}^N \int\limits_{\sph}  f(x\cdot z_j ) Y_n (x) d\sigma (x) = \frac{1}{N} \widehat{f} (n,\lambda) \sum_{j=1}^N Y_n(z_j).
\end{equation}
Thus we find that 
\begin{align}
D_{L^2, f}^2(Z) &  = \| G - \widehat{G} (0,\ld) \|_2^2  =  \sum_{n=1}^\infty \sum_{k=1}^{d_n}  |\langle G, Y_{n,k} \rangle|^2 \\
\nonumber & = \frac1{N^2} \sum_{n=1}^\infty \big|  \widehat{f} (n,\lambda) \big|^2  \sum_{k=1}^{d_n}  \bigg| \sum_{j=1}^N  Y_{n,k} (z_j)   \bigg|^2 \\ 
\nonumber & \ge \frac{1}{N^2 } \cdot \min_{1\le n \le L}  \big|  \widehat{f} (n,\lambda) \big|^2  \cdot \sum_{n=1}^L \sum_{k=1}^{d_n}  \bigg| \sum_{j=1}^N  Y_{n,k} (z_j)   \bigg|^2\\
\nonumber &  =  \frac{1}{N^2 } \cdot \min_{1\le n \le L}  \big|  \widehat{f} (n,\lambda) \big|^2  \cdot  \left( \sum_{n=0}^L \sum_{k=1}^{d_n}  \bigg| \sum_{j=1}^N  Y_{n,k} (z_j)   \bigg|^2  - N^2 \right),
\end{align}
where we used the fact that the term, corresponding to $n=0$, is $N^2$.  If we set $L= C' N^{\frac1{d}}$ with $C'$ being a large dimensional constant,  and leave just the diagonal terms in \eqref{1-1}, we see that $$ \sum_{n=1}^L \sum_{k=1}^{d_n}  \bigg| \sum_{j=1}^N  Y_{n,k} (z_j)   \bigg|^2  \ge c'' N^2.$$
Therefore, again applying \eqref{1-1} of Theorem \ref{thm-1-1}, we arrive at the following corollary:

\begin{corollary}\label{c.1}
Let   $f \in L^2 \big([-1,1],  (1-t^2)^{\lambda - \frac12} \big)$. For $Z=\{z_1,\dots, z_N \} \subset \sph$ we have 
\begin{equation}\label{e.c1}
D_{L^2, f}^2 (Z) \gtrsim \frac{1}{N } \cdot \min_{1\le n \le C' N^{\frac1{d}} }  \big|  \widehat{f} (n,\lambda) \big|^2  \cdot  \sum_{i,j=1}^N \f {\log ( 2 + N^{1/d} \|z_i-z_j\|)}{(1+ N^{1/d}\|z_i-z_j\|)^{d+1}},
\end{equation}
where  $C'$ is a large constant depending only on the dimension. 
\end{corollary}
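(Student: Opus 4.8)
The statement is essentially the displayed chain of inequalities that precedes it, so the plan is simply to organize that computation and pin down the choice of truncation level $L$ together with the relevant constants. First I would introduce the auxiliary function $G(x):=\frac1N\sum_{j=1}^N f(x\cdot z_j)$, which lies in $L^2(\sph,d\s)$, and note that by definition \eqref{e.dL2} one has $D_{L^2,f}(Z)=\|G-\widehat G(0,\lambda)\|_{L^2(\sph)}$. Parseval's identity in the spherical harmonic basis then gives $D_{L^2,f}^2(Z)=\sum_{n=1}^\infty\sum_{k=1}^{d_n}|\langle G,Y_{n,k}\rangle|^2$, and the Funk--Hecke formula evaluates $\langle G,Y_{n,k}\rangle=\frac1N\,\widehat f(n,\lambda)\sum_{j=1}^N Y_{n,k}(z_j)$. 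Hence the degree-$n$ contribution factors as $\frac1{N^2}|\widehat f(n,\lambda)|^2\sum_{k=1}^{d_n}|\sum_j Y_{n,k}(z_j)|^2$.

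Next I would truncate the series at a level $L$ to be chosen, extract the minimum of the Gegenbauer--Fourier coefficients, and reinstate the $n=0$ block. Since $Y_{0,1}\equiv 1$ that block equals $N^2$, so
\[
 D_{L^2,f}^2(Z)\ \ge\ \frac{1}{N^2}\Big(\min_{1\le n\le L}|\widehat f(n,\lambda)|^2\Big)\left(\sum_{n=0}^L\sum_{k=1}^{d_n}\Big|\sum_{j=1}^N Y_{n,k}(z_j)\Big|^2\ -\ N^2\right).
\]
Now I would fix $L=\lceil C'N^{1/d}\rceil$ and invoke Theorem \ref{thm-1-1} twice. Keeping only the diagonal terms $i=j$ in \eqref{1-1} shows $\sum_{n=0}^L\sum_k|\sum_j Y_{n,k}(z_j)|^2\ge c_d L^d N\ge c_d(C')^d N^2$; choosing $C'$ so large that $c_d(C')^d\ge 2$ forces the parenthesized expression above to be at least half of $\sum_{n=0}^L\sum_k|\sum_j Y_{n,k}(z_j)|^2$.

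Finally I would apply \eqref{1-1} a second time, now retaining the full off-diagonal sum, to get $\sum_{n=0}^L\sum_k|\sum_j Y_{n,k}(z_j)|^2\ge c_d L^d\sum_{i,j=1}^N\frac{\log(2+L\|z_i-z_j\|)}{(1+L\|z_i-z_j\|)^{d+1}}$. Since $L\asymp N^{1/d}$ with implied constant depending only on $d$, we have $L^d\gtrsim_d N$; and because $t\mapsto\log(2+t)$ is increasing while $(1+C'N^{1/d}r)^{d+1}\le(C')^{d+1}(1+N^{1/d}r)^{d+1}$, replacing $L$ by $N^{1/d}$ inside the kernel costs only a dimensional factor. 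Collecting the constants $\frac1{N^2}\cdot\frac12\cdot c_d L^d\gtrsim_d\frac1N$ then yields \eqref{e.c1}. The only genuinely delicate point is this bookkeeping---the double use of Theorem \ref{thm-1-1} (once on the diagonal to dominate the subtracted $N^2$, once on the full sum) and the comparison of the kernels at scales $L$ and $N^{1/d}$; everything else is a direct unwinding of Funk--Hecke and Parseval.
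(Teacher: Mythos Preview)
Your proposal is correct and follows essentially the same route as the paper: define $G$, expand via Parseval and Funk--Hecke, truncate at $L\asymp C'N^{1/d}$, use the diagonal of \eqref{1-1} to absorb the subtracted $N^2$, and then apply \eqref{1-1} in full. Your write-up is in fact slightly more explicit than the paper's own argument in two places---the ``half of the full sum'' bookkeeping and the comparison of the kernel at scales $L$ and $N^{1/d}$---both of which the paper leaves implicit.
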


Such lower bounds, which show that finite point sets cannot be distributed too uniformly, are a common theme in the subject of {\emph{irregularities of distribution}}.  Using the generalized Stolarsky principle \eqref{e.gstol} and relation \eqref{e.Ff} we can also obtain  a similar corollary for the discrete energy:
\begin{corollary}\label{c.2}
Assume that $F\in C[-1,1]$ and $\widehat{F} (n,\lambda) \ge 0$ for all $n\ge 1$ (i.e., up to the constant term, $F$ is a positive definite function on the sphere $\mathbb S^d$). Then for any point distribution  $Z=\{z_1,\dots, z_N \} \subset \sph$ 
\begin{equation}\label{e.c2}
\frac{1}{N^2} \sum_{i,j=1}^N  F(z_i\cdot z_j) -  I_F (\sigma)   \gtrsim \frac{1}{N } \cdot \min_{1\le n \le C' N^{1/d} }     \widehat{F} (n,\lambda)   \cdot  \sum_{i,j=1}^N \f {\log ( 2 + N^{1/d} \|z_i-z_j\|)}{(1+ N^{1/d}\|z_i-z_j\|)^{d+1}},
\end{equation}
where  $C'$ is a large constant depending only on the dimension,  and $ I_F (\sigma) = \int\limits_{\sph}\int\limits_{\sph} F(x\cdot y)\, d\s(x) d\s(y) $ denotes the  energy integral with potential given by $F$. 
\end{corollary}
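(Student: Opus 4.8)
The plan is to read \eqref{e.c2} off from Corollary \ref{c.1} via the generalized Stolarsky principle \eqref{e.gstol}, turning the energy bound into a discrepancy bound. Given $F\in C[-1,1]$ with $\widehat F(n,\lambda)\ge 0$ for all $n\ge 1$, I would define $f\colon[-1,1]\to\mathbb R$ through its Gegenbauer coefficients by $\widehat f(n,\lambda):=\sqrt{\widehat F(n,\lambda)}$, which is exactly relation \eqref{e.Ff}. A short computation expanding the weighted $L^2$-norm in Gegenbauer coefficients shows that $\|f\|_{L^2((1-t^2)^{\lambda-1/2})}^2$ is comparable to $\sum_{n\ge 1}\widehat F(n,\lambda)\,n^{-(d-1)}$, hence is controlled by $\sum_{n\ge1}\widehat F(n,\lambda)\,d_n$; and this last series converges because a continuous function that is positive definite on $\mathbb S^d$ (which, modulo the constant term, is precisely what the hypothesis $\widehat F(n,\lambda)\ge 0$ means, by Schoenberg's theorem) has an absolutely convergent Gegenbauer expansion. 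Therefore $f\in L^2\big([-1,1],(1-t^2)^{\lambda-1/2}\big)$ and \eqref{e.gstol} applies, giving
$$D_{L^2,f}^2(Z)=\frac1{N^2}\sum_{i,j=1}^N F(z_i\cdot z_j)-I_F(\sigma).$$

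Now I would apply Corollary \ref{c.1} to this particular $f$ and substitute $|\widehat f(n,\lambda)|^2=\widehat F(n,\lambda)$: the factor $\min_{1\le n\le C'N^{1/d}}|\widehat f(n,\lambda)|^2$ in \eqref{e.c1} becomes $\min_{1\le n\le C'N^{1/d}}\widehat F(n,\lambda)$ and \eqref{e.c1} turns verbatim into \eqref{e.c2}. Equivalently, and avoiding the Stolarsky machinery, one can argue directly: by the addition/Funk--Hecke formula the kernel expands as $F(x\cdot y)=\sum_{n\ge0}\widehat F(n,\lambda)\sum_{k=1}^{d_n}Y_{n,k}(x)Y_{n,k}(y)$, absolutely and uniformly by the same positive-definiteness, so pairing it with $\frac1N\sum_j\delta_{z_j}$ in both variables and isolating the $n=0$ term $\widehat F(0,\lambda)=I_F(\sigma)$ gives $\frac1{N^2}\sum_{i,j}F(z_i\cdot z_j)-I_F(\sigma)=\sum_{n\ge1}\widehat F(n,\lambda)\,\frac1{N^2}\sum_{k}\big|\sum_j Y_{n,k}(z_j)\big|^2$; one then discards the terms with $n>L$, pulls $\min_{1\le n\le L}\widehat F(n,\lambda)$ out front, and finishes with the two-step use of Theorem \ref{thm-1-1} from the paragraph preceding Corollary \ref{c.1} --- keeping only the diagonal in \eqref{1-1} with $L=C'N^{1/d}$ and $C'$ a large dimensional constant yields $\sum_{n=1}^L\sum_k\big|\sum_j Y_{n,k}(z_j)\big|^2\gtrsim N^2$, a second use of \eqref{1-1} yields $\sum_{n=1}^L\sum_k\big|\sum_j Y_{n,k}(z_j)\big|^2\ge c_dL^d R-N^2$ with $R=\sum_{i,j}\frac{\log(2+N^{1/d}\|z_i-z_j\|)}{(1+N^{1/d}\|z_i-z_j\|)^{d+1}}$, and combining the two bounds (splitting according to whether or not $N^2$ dominates $c_dL^dR$) gives $\sum_{n=1}^L\sum_k\big|\sum_j Y_{n,k}(z_j)\big|^2\gtrsim NR$, which is \eqref{e.c2}.

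The one step that is more than bookkeeping is the legitimacy of the termwise expansion of the energy --- equivalently, that the $f$ above lies in $L^2(w_\lambda)$, so that \eqref{e.gstol} is applicable. Under the positive-definiteness hypothesis this is classical (Mercer/Schoenberg), and should one wish to sidestep it, one can first prove \eqref{e.c2} when $F$ is a finite linear combination of Gegenbauer polynomials, where the identity is immediate and the rest of the argument is unchanged, and then pass to a general continuous $F$ by uniform approximation with the Ces\`aro means $\sigma_L^\delta F$, whose Gegenbauer coefficients $\frac{A^\delta_{L-n}}{A^\delta_L}\widehat F(n,\lambda)$ are again nonnegative. Everything after this point is a verbatim repetition of the reasoning already used to obtain Corollary \ref{c.1}.
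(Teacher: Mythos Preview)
Your proposal is correct and follows essentially the same route as the paper: invoke the generalized Stolarsky principle \eqref{e.gstol} with $f$ defined by $\widehat f(n,\lambda)=\sqrt{\widehat F(n,\lambda)}$ (relation \eqref{e.Ff}), then apply Corollary~\ref{c.1} verbatim; your alternative direct expansion via Funk--Hecke is the same computation unpacked. The paper handles the one nontrivial point you flag---that such an $f$ actually lies in $L^2(w_\lambda)$---by a reference (the Remark following Corollary~\ref{c.2} cites \cite[Lemma 2.3]{bil2}), whereas you sketch the Schoenberg/Mercer argument and an approximation-by-Ces\`aro-means workaround; either is fine.
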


\noindent {\emph{Remark:}} The fact that every continuous positive definite function on the sphere can be represented by \eqref{e.Ff}, i.e. has appropriate decay of $\widehat{F} (n,\lambda)$, has been discussed in \cite[Lemma 2.3]{bil2}.\\

It is known (see e.g. \cite{bil2,bil3}) that for positive definite functions $F$, the uniform surface measure $\sigma$ minimizes the energy with potential $F$ over all Borel probability measures on $\mathbb S^d$. Thus Corollary \ref{c.2} states, in a quantitative way, that the energy of finite atomic measures with equal weights cannot be too close to the minimum. 

We observe that leaving just the $N$ diagonal terms ($i=j$) in the right-hand sides of \eqref{e.c1} and \eqref{e.c2} we recover the bounds obtained in \cite[Theorem 4.2]{bil2}:
\begin{align}
D_{L^2, f} (Z)  &\gtrsim   \min_{1\le n \le C' N^{\frac1{d}} }  \big|  \widehat{f} (n,\lambda) \big|,  \\
\nonumber \frac{1}{N^2} \sum_{i,j=1}^N  F(z_i\cdot z_j) -  I_F (\sigma)  &   \gtrsim   \min_{1\le n \le C' N^{\frac1{d}}}     \widehat{F} (n,\lambda).
\end{align}
Corollaries \ref{c.1} and \ref{c.2} add more subtle information to these  lower bounds. \\

Returning to the classical case of the spherical cap discrepancy \eqref{e.disccap},  recall that Beck's  famous result \cite{beck}, which states that 
 \begin{equation}\label{e.beck}
D_{L^2, \textup{cap}} (Z)  \gtrsim N^{-\frac12-\frac{1}{2d}}
 \end{equation} 
for any $N$-point set in the sphere $\sph$ (and this  is optimal up to a logarithmic factor). Using the fact that  (see e.g. \cite{Sz} or \cite{bil2}) 
\begin{equation}
\int_{-1}^1 \big| \widehat{f_\tau} (n,\lambda) \big|^2  \, d\tau \approx n^{-d-1}
\end{equation}
 and  repeating the arguments above almost verbatim, but with an additional averaging in $\tau$, one obtains a refinement of Beck's original estimate (this refinement has been stated in \S \ref{s.main} as Theorem \ref{t.beck+}). 

\begin{corollary}\label{c.3}
For any point distribution  $Z=\{z_1,\dots, z_N \} \subset \sph$ 
\begin{equation}\label{e.c3}
D_{L^2, \textup{cap}}^2 (Z) \gtrsim_d N^{-2-\frac1{d}} \cdot \sum_{i,j=1}^N \f {\log \,( 2 + N^{1/d} \|z_i-z_j\|)}{(1+ N^{1/d}\|z_i-z_j\|)^{d+1}}.
\end{equation}
\end{corollary}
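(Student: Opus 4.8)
The plan is to derive Corollary \ref{c.3} from Corollary \ref{c.1} by integrating in the cap parameter $\tau$, exploiting the linearity of the generalized Stolarsky identity \eqref{e.gstol} and the Funk--Hecke argument in the parameter $\tau$. Concretely, for each fixed $\tau\in(-1,1)$ apply the chain of inequalities preceding Corollary \ref{c.1} to the specific function $f=f_\tau={\bf 1}_{[\tau,1]}$, but \emph{stop} at the line
$$
D_{L^2,f_\tau}^2(Z)\ \ge\ \frac{1}{N^2}\cdot\Big|\widehat{f_\tau}(n,\lambda)\Big|^2\Big|_{n=n(\tau)}\cdot\Big(\sum_{m=0}^{L}\sum_{k=1}^{d_m}\big|\sum_{j=1}^N Y_{m,k}(z_j)\big|^2-N^2\Big)
$$
\emph{before} taking a minimum over $n$. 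That is, for every individual frequency $1\le n\le L$ one has
$$
D_{L^2,f_\tau}^2(Z)\ \ge\ \frac{1}{N^2}\,\big|\widehat{f_\tau}(n,\lambda)\big|^2\,\Big(\sum_{m=0}^{L}\sum_{k=1}^{d_m}\big|\sum_{j=1}^N Y_{m,k}(z_j)\big|^2-N^2\Big),
$$
since the cross terms vanish by orthogonality and one is simply dropping all frequencies other than $n$ from the nonnegative sum. The point of keeping $n$ free is that $\widehat{f_\tau}(n,\lambda)$ depends on $\tau$ in a way that is small for individual $\tau$ but whose \emph{average} over $\tau$ is of the clean order $n^{-d-1}$.

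First I would fix $L=C'N^{1/d}$ as in Corollary \ref{c.1}, so that by Theorem \ref{thm-1-1} together with the diagonal bound $\sum_{m=1}^L\sum_k|\sum_j Y_{m,k}(z_j)|^2\ge c''N^2$ one has
$$
\sum_{m=0}^{L}\sum_{k=1}^{d_m}\Big|\sum_{j=1}^N Y_{m,k}(z_j)\Big|^2-N^2\ \gtrsim\ L^d\sum_{i,j=1}^N\frac{\log(2+L\|z_i-z_j\|)}{(1+L\|z_i-z_j\|)^{d+1}}\ =:\ \mathcal S.
$$
Then I would integrate the displayed lower bound for $D_{L^2,f_\tau}^2(Z)$ over $\tau\in[-1,1]$. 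Since the right-hand side factors as $N^{-2}\,|\widehat{f_\tau}(n,\lambda)|^2\cdot\big(\text{quantity independent of }\tau\big)$, integrating gives, for each fixed $1\le n\le L$,
$$
D_{L^2,\textup{cap}}^2(Z)=\int_{-1}^1 D_{L^2,f_\tau}^2(Z)\,d\tau\ \ge\ \frac{1}{N^2}\Big(\int_{-1}^1\big|\widehat{f_\tau}(n,\lambda)\big|^2\,d\tau\Big)\cdot\mathcal S\ \gtrsim\ \frac{n^{-d-1}}{N^2}\,\mathcal S,
$$
where the last step uses the asymptotic $\int_{-1}^1|\widehat{f_\tau}(n,\lambda)|^2\,d\tau\approx n^{-d-1}$ quoted in the excerpt (due to e.g.\ Szeg\H o, see \cite{Sz,bil2}). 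Now choose $n$ to make this strongest, namely $n\sim L\sim N^{1/d}$ (the \emph{largest} admissible frequency, since $n^{-d-1}$ is decreasing), which yields $n^{-d-1}\sim L^{-d-1}\sim N^{-1-1/d}$. Plugging in $\mathcal S\asymp L^d\sum_{i,j}\log(\cdots)/(1+\cdots)^{d+1}$ and $L\asymp N^{1/d}$ gives
$$
D_{L^2,\textup{cap}}^2(Z)\ \gtrsim\ \frac{1}{N^2}\cdot N^{-1-\frac1d}\cdot N\cdot\sum_{i,j=1}^N\frac{\log(2+N^{1/d}\|z_i-z_j\|)}{(1+N^{1/d}\|z_i-z_j\|)^{d+1}}\ =\ N^{-2-\frac1d}\sum_{i,j=1}^N\frac{\log(2+N^{1/d}\|z_i-z_j\|)}{(1+N^{1/d}\|z_i-z_j\|)^{d+1}},
$$
which is exactly \eqref{e.c3}. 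Finally, Theorem \ref{t.beck+} is just the square root of \eqref{e.c3} after pulling one factor of $N^{-1}$ outside the sum: $N^{-2-1/d}=N^{-1-1/d}\cdot N^{-1}$, so $D_{L^2,\textup{cap}}(Z)\gtrsim_d N^{-1/2-1/(2d)}\big(\tfrac1N\sum_{i,j}\log(\cdots)/(1+\cdots)^{d+1}\big)^{1/2}$.

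The only genuinely delicate point is the interchange of the per-$\tau$ lower bound with the integration in $\tau$ and the use of the averaged asymptotic $\int_{-1}^1|\widehat{f_\tau}(n,\lambda)|^2\,d\tau\approx n^{-d-1}$: one must be careful that the implied constants in this asymptotic are uniform in $n$ (they are, and this is classical, see \cite{Sz,bil2}), and that taking the minimum over $n$ in Corollary \ref{c.1} is \emph{not} what one wants here — rather one keeps $n$ free, integrates, and only then optimizes, since $\min_n\int|\widehat{f_\tau}(n,\lambda)|^2 d\tau$ would lose the favorable exponent. Everything else — the diagonal lower bound on the spherical-harmonic sum from Theorem \ref{thm-1-1}, the vanishing of cross terms, and the bookkeeping of powers of $N$ — is routine and proceeds exactly as in the derivation of Corollary \ref{c.1}, "almost verbatim, but with an additional averaging in $\tau$", as indicated in the excerpt.
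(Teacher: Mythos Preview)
There is a genuine gap. The inequality you display ``for every individual frequency $1\le n\le L$'',
\[
D_{L^2,f_\tau}^2(Z)\ \ge\ \frac{1}{N^2}\,\big|\widehat{f_\tau}(n,\lambda)\big|^2\,\Big(\sum_{m=0}^{L}\sum_{k=1}^{d_m}\Big|\sum_{j=1}^N Y_{m,k}(z_j)\Big|^2-N^2\Big),
\]
is false. From the Parseval expansion $D_{L^2,f_\tau}^2(Z)=N^{-2}\sum_{m\ge 1}|\widehat{f_\tau}(m,\lambda)|^2 S_m$ with $S_m=\sum_k|\sum_j Y_{m,k}(z_j)|^2\ge 0$, ``dropping all frequencies other than $n$'' yields only $N^{-2}|\widehat{f_\tau}(n,\lambda)|^2 S_n$, not $|\widehat{f_\tau}(n,\lambda)|^2$ times the \emph{entire} sum $\sum_{m=1}^L S_m$. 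To factor out a single coefficient in front of $\sum_{m=1}^L S_m$ one must replace each $|\widehat{f_\tau}(m,\lambda)|^2$ by $\min_{1\le m\le L}|\widehat{f_\tau}(m,\lambda)|^2$ --- and this minimum depends on $\tau$ and is typically $0$ (the coefficients $\widehat{f_\tau}(m,\lambda)$ are, up to constants, Gegenbauer polynomials in $\tau$ and have zeros). So the step fails and the subsequent integration in $\tau$ is built on an invalid bound. Concretely, your intermediate claim would give $D_{L^2,\textup{cap}}^2(Z)\gtrsim N^{-2}\mathcal S$ at $n=1$, which is far too strong.

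The correct route --- and what the paper means by ``almost verbatim, but with an additional averaging in $\tau$'' --- is to integrate \emph{before} doing anything else. By Fubini,
\[
D_{L^2,\textup{cap}}^2(Z)=\int_{-1}^1 D_{L^2,f_\tau}^2(Z)\,d\tau
=\frac{1}{N^2}\sum_{m\ge 1}\Big(\int_{-1}^1|\widehat{f_\tau}(m,\lambda)|^2\,d\tau\Big)\,S_m,
\]
and now the coefficients $\int_{-1}^1|\widehat{f_\tau}(m,\lambda)|^2\,d\tau\approx m^{-d-1}$ are strictly positive for every $m$. One then truncates to $m\le L$ and takes the minimum (which \emph{is} what you want, contrary to your last paragraph): $\min_{1\le m\le L}m^{-d-1}=L^{-d-1}$, giving
\[
D_{L^2,\textup{cap}}^2(Z)\ \gtrsim\ \frac{L^{-d-1}}{N^2}\sum_{m=1}^L S_m\ \gtrsim\ \frac{L^{-d-1}}{N^2}\cdot L^d\,\mathcal E,
\]
where $\mathcal E$ is the energy sum from Theorem~\ref{thm-1-1}. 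With $L=C' N^{1/d}$ this is \eqref{e.c3}. In short: the minimum over $n$ is still taken, but only \emph{after} the $\tau$-average, which is precisely what makes the coefficients nonvanishing; your attempt to avoid the minimum by fixing a single $n$ before integrating cannot be justified.
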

As before, by considering only the diagonal terms one recovers Beck's result \eqref{e.beck}, and the bound \eqref{e.c3} provides more information: in particular, if the order of magnitude of the energy on the right-hand side is significantly greater than $N$, then the spherical cap discrepancy of $Z$ is necessarily too big.
The original Stolarsky principle \eqref{e.stol} then leads to the following corollary concerning the sum of Euclidean distances between $N$ points on the sphere:
\begin{corollary}
For any point distribution  $Z=\{z_1,\dots, z_N \} \subset \sph$ 
\begin{equation}\label{e.c3}
\mathcal J_d - \,\, \frac{1}{N^2} \sum_{i,j = 1}^N \| z_i - z_j \| \gtrsim_d N^{-2 - \frac1{d} } \cdot \sum_{i,j=1}^N \f {\log \,  ( 2 + N^{\f 1d} \|z_i-z_j\|)}{(1+ N^{\f 1d}\|z_i-z_j\|)^{d+1}},
\end{equation}
where  
 $$ \mathcal J_d = \int\limits_{\mathbb S^{d}} \int\limits_{\mathbb S^{d}} \|  x- y \| \, d\sigma (x)\, d\sigma (y)\,\, =   \frac{ 2^d \big[ \Gamma \big(\frac{d+1}{2}\big) \big]^2 }{\sqrt{\pi} \Gamma \big( d+ \frac12 \big) }.$$
 \end{corollary}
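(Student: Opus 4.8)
The plan is to combine two results already established in the excerpt: the original Stolarsky invariance principle \eqref{e.stol} and the refinement of Beck's bound \eqref{e.c3} from Corollary~\ref{c.3}. The point is that both sides of \eqref{e.stol} are, up to the dimensional constant $c_d$, exactly the objects appearing in Corollary~\ref{c.3}: the left-hand side of \eqref{e.stol} equals $c_d D_{L^2,\textup{cap}}^2(Z)$, while its right-hand side is precisely $\mathcal J_d - \frac1{N^2}\sum_{i,j=1}^N \|z_i - z_j\|$, since $\mathcal J_d = \int_{\sph}\int_{\sph}\|x-y\|\,d\sigma(x)\,d\sigma(y)$ by definition.

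First I would invoke the Stolarsky principle \eqref{e.stol} to rewrite
$$
\mathcal J_d - \frac1{N^2}\sum_{i,j=1}^N \|z_i - z_j\| = c_d\, D_{L^2,\textup{cap}}^2(Z).
$$
Then I would apply Corollary~\ref{c.3}, which gives
$$
D_{L^2,\textup{cap}}^2(Z) \gtrsim_d N^{-2-\frac1d}\sum_{i,j=1}^N \frac{\log(2 + N^{1/d}\|z_i - z_j\|)}{(1 + N^{1/d}\|z_i - z_j\|)^{d+1}}.
$$
Absorbing $c_d$ into the implied dimensional constant yields exactly \eqref{e.c3} in its stated form. The only remaining point is to record the closed-form value of $\mathcal J_d$; this is a classical computation of the average Euclidean distance on $\sph$ with respect to normalized surface measure, obtained by reducing the double integral to a single Beta-function integral in the geodesic (or chordal) variable and evaluating via $\Gamma$-function identities, giving $\mathcal J_d = 2^d[\Gamma(\tfrac{d+1}2)]^2 / (\sqrt\pi\,\Gamma(d+\tfrac12))$.

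There is no real obstacle here: the corollary is a direct translation of Corollary~\ref{c.3} through the classical Stolarsky identity, and the proof amounts to citing \eqref{e.stol} and \eqref{e.c3} and matching notation. If anything, the only mild care needed is to make sure the normalization conventions for the measure $\sigma$ and the constant $c_d$ in \eqref{e.stol} are consistent with those used in the derivation of Corollary~\ref{c.3}, so that no spurious $N$-dependent or dimension-dependent factors are introduced; since both statements are quoted from the same framework (\cite{stol,bil2,bil3}), this consistency is automatic.
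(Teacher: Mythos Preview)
Your proposal is correct and follows exactly the paper's own approach: the corollary is stated immediately after Corollary~\ref{c.3} with the remark that ``the original Stolarsky principle \eqref{e.stol} then leads to the following corollary,'' i.e., one simply substitutes the lower bound on $D_{L^2,\textup{cap}}^2(Z)$ from Corollary~\ref{c.3} into the identity \eqref{e.stol}. No additional argument is given in the paper beyond this, so your write-up is entirely in line with it.
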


\textbf{Acknowledgment.} Parts of this work were started at the Workshop ``Discrepancy Theory and Quasi-Monte Carlo methods" held at the Erwin Schr\"odinger Institute, September 25 -- 29, 2017. The authors
gratefully acknowledge its hospitality.  Bilyk's work is supported by NSF grant DMS 1665007.

\end{document}